\newtheorem{lma}{Lemma}[section]
\newtheorem{prop}[lma]{Proposition}
\newtheorem{thm}[lma]{Theorem}
\newtheorem{cor}[lma]{Corollary}
\theoremstyle{definition}
\newtheorem{rmk}[lma]{Remark}
\numberwithin{equation}{section}
\newcommand{\defeq}{=}
\newcommand{\bbN}{\mathbb{N}}
\newcommand{\bbR}{\mathbb{R}}
\newcommand{\bbS}{\mathbb{S}}
\newcommand{\rmd}{\mathrm{d}}
\newcommand{\incmat}{\mathbb{M}}
\newcommand{\geopot}{u}
\newcommand{\steplenpot}{\psi}
\newcommand{\coding}{\pi}
\newcommand{\holder}{H\"older }
\newcommand{\recset}{\mathcal{R}}
\newcommand{\unifrecset}{\mathcal{UR}}
\newcommand{\windowword}{\mathcal{B}}
\newcommand{\hausdim}{\dim_H}
\newcommand{\diam}{\mathrm{diam}}
\newcommand{\lshift}{\sigma}
\newcommand{\gibbsmeas}{\nu}
\newcommand{\distconst}{V}
\newcommand{\shiftsp}{\Sigma}
\newcommand{\levelset}{\mathcal{L}}
\newcommand{\interval}{I}
\newcommand{\symbdist}{d}
\newcommand{\udim}{\dim_\geopot}
\newcommand{\fenerg}{\beta}
\newcommand{\attractor}{J}
\newcommand{\lodert}{\underline{D}}
\newcommand{\updert}{\overline{D}}
\newcommand{\cdf}{F}
\newcommand{\moderate}{\mathcal{UD}}
\newcommand{\contraction}{f}
\newcommand{\symbset}{A}
\newcommand{\postfixset}{\mathcal{S}}
\newcommand{\infixset}{\mathcal{I}}
\newcommand{\newsymbset}{\mathcal{A}}
\newcommand{\wordset}{\mathcal{W}}
\newcommand{\freqappearset}{\mathcal{F}}
\newcommand{\notfreq}{\freqappearset'}
\newcommand{\cylchain}{\mathcal{C}}
\newcommand{\vertexset}{\mathcal{V}}
\newcommand{\edgeset}{\mathcal{E}}
\newcommand{\initialv}{p_-}
\newcommand{\terminalv}{p_+}
\newcommand{\intervalfamily}{\mathcal{\interval}}
\newcommand{\interior}{\mathrm{Int}}
\newcommand{\uniflevset}{\mathcal{UL}}
\newcommand{\onedimmani}{\mathcal{M}^1}
\newcommand{\GDMS}{\Phi}
\newcommand{\steplenlo}[2]{\alpha^-_{#1,#2}}
\newcommand{\steplenup}[2]{\alpha^+_{#1,#2}}
\title[Uniform level sets with constraints on word appearance]{A multifractal analysis for uniform level sets with constraints on word appearance}
\author{Ziyu Liu}
\address{Graduate School of Mathmatics, Nagoya University}
\email{m19050e@math.nagoya-u.ac.jp}
\begin{document}
\begin{abstract}
    Given a topologically transitive subshift of finite type, the $\geopot$-dimension of the $\alpha$-level set of the quotient of Birkhoff sums is a well-studied topic. In this paper, we consider the uniform $\alpha$-level set, which is a subset of the $\alpha$-level set. We shall show that the $\alpha$-level set and the uniform $\alpha$-level set have the same $\geopot$-dimension. Moreover, we also consider two types of subsets of the $\alpha$-level set, whose elements are sequences satisfying some constraints on their subwords. We shall show that for $\alpha$ not on the boundary of the dimension spectrum, these subsets also have the same $\geopot$-dimension as the $\alpha$-level set. As an application, we shall perform a multifractal analysis of the \holder regularity of a Gibbs measure on a one-dimensional manifold.
\end{abstract}
\maketitle
\section{Introduction and Statement of Main Results}
    Let $\lshift:\shiftsp\rightarrow\shiftsp$ be a topologically transitive one-sided subshift of finite type (SFT). Let $v:\shiftsp\rightarrow(-\infty,0)$ and $\steplenpot:\shiftsp\rightarrow\bbR$ be two \holder continuous functions. The meaning of the \holder continuity of these functions will be made clear in Subsection \ref{subsec:metricshiftsp}.
		
    For a function $f:\shiftsp\rightarrow\bbR$ and a positive integer $n$, $S_nf\defeq\sum_{k=0}^{n-1}f\circ\lshift^k$ will be called the $n$-th Birkhoff sum of $f$. Define $S_0f$ to be the zero function. For any $\alpha\in\bbR$, the $\geopot$-dimension of the following $\alpha$-level set of  the quotient of Birkhoff sums
    \begin{equation*}
        \levelset^\alpha_{ \steplenpot, v}\defeq\Set{\xi\in\shiftsp|\lim_{n\rightarrow+\infty}\frac{S_n\steplenpot(\xi)}{S_nv(\xi)}=\alpha}
    \end{equation*}
    has been given by \cite{pesinweiss0,schmeling}. Here, $\geopot:\shiftsp\rightarrow(0,+\infty)$ is a \holder continuous function, and the $\geopot$-dimension is a notion introduced by Barreira and Schmeling \cite{barsch}. Similar to Hausdorff dimension, it is also defined through the Caratheodory's construction of measures; see \cite{barsch} for details. In fact, owing to the \holder continuity of $\geopot$, the $\geopot$-dimension $\udim(E)$ of an arbitrary $E\subseteq\shiftsp$ equals its Hausdorff dimension with metric taken to be $d_\geopot$; see Section \ref{sec:known} for the definition of the metric $d_\geopot$.
    
    We shall summarize in Subsection \ref{subsec:pre:gibbs} some of the results proved in \cite{pesinweiss,schmeling}. Postponing the details to Subsection \ref{subsec:pre:gibbs}, we here simply state the known results on the $\geopot$-dimension of $\levelset^\alpha_{\steplenpot,v}$. Define
    \begin{equation} \label{eqn:steplenloupdef}
		\steplenlo{\steplenpot}{v}\defeq\inf_{\xi\in\shiftsp}\liminf_{n\rightarrow+\infty}\frac{S_n\steplenpot(\xi)}{S_nv(\xi)};\quad\steplenup{\steplenpot}{v}\defeq\sup_{\xi\in\shiftsp}\limsup_{n\rightarrow+\infty}\frac{S_n\steplenpot(\xi)}{S_nv(\xi)}.
    \end{equation}
    It is known that $\levelset^\alpha_{\steplenpot,v}$ is non-empty if and only if $\steplenlo{\steplenpot}{v}\leq\alpha\leq\steplenup{\steplenpot}{v}$ \cite{schmeling}. For each $\alpha\in[\steplenlo{\steplenpot}{v},\steplenup{\steplenpot}{v}]$ and $q\in\bbR$, there exists a unique $\fenerg_\alpha(q)\in\bbR$ such that
    \begin{equation*}
        P(q(\steplenpot-\alpha v)-\fenerg_\alpha(q)\geopot)=0,
    \end{equation*}
    where $P(\cdot)$ denotes the topological pressure. The $\geopot$-dimension of $\levelset^\alpha_{\steplenpot,v}$ is then $\udim(\levelset^\alpha_{ \steplenpot, v})=\inf_{q\in\bbR} \fenerg_\alpha(q)$ \cite{pesinweiss}. The assertions above are all contained in Theorem \ref{thm:pw}.

    There are a number of articles, e.g. \cite{FS,GJK}, exploring the dimension of subsets of the level sets $\levelset^\alpha_{\steplenpot, v}$. Among these subsets, we are in particular interested in what we shall call the \emph{uniform level sets} of the quotient of Birkhoff sums, defined by
    \begin{align*}
        \uniflevset^\alpha_{ \steplenpot, v}=\Set{\xi\in\shiftsp|\sup_{n\in\bbN}|S_n\steplenpot(\xi)-\alpha S_nv(\xi)|<+\infty}.
    \end{align*}
    Clearly, $\uniflevset^\alpha_{ \steplenpot, v}\subseteq \levelset^\alpha_{\steplenpot, v}$ for any $\alpha\in\bbR$. Now that the multifractal analysis of Gibbs measures provides a formula of $\levelset^\alpha_{\steplenpot, v}$, one would naturally ask whether
    \begin{equation}\label{eqn:gjk}
        \udim(\uniflevset^\alpha_{ \steplenpot, v})=\udim(\levelset^\alpha_{ \steplenpot, v})
    \end{equation}
    always holds.
    
    Indeed, if one add some extra assumptions, this question has been answered affirmatively. Fan and Schmeling showed (\ref{eqn:gjk}) in \cite{FS}, when $\geopot$ is a constant function, $\lshift$ is a full shift and $\steplenlo{\steplenpot}{v}<\alpha<\steplenup{\steplenpot}{v}$. Later, in a recent article \cite{GJK} by Gr\"oger, Jaerisch and Kesseb\"ohmer, the authors essentially showed the same assertion when $\lshift$ is a full shift and $\steplenlo{\steplenpot}{v}<\alpha<\steplenup{\steplenpot}{v}$. In other words, the proof in \cite{GJK} does not need the function $\geopot:\shiftsp\rightarrow(0,+\infty)$ to be constant. In this paper, we shall show that (\ref{eqn:gjk}) holds generally. More precisely, we claim the following proposition.
    \begin{prop}\label{prop:main:rec}
        Let $\lshift:\shiftsp\rightarrow\shiftsp$ be a topologically transitive SFT and let the functions $\steplenpot:\shiftsp\rightarrow\bbR$, $v:\shiftsp\rightarrow(-\infty,0)$ and $\geopot:\shiftsp\rightarrow(0,+\infty)$ be \holder continuous. Then, for any $\alpha\in\bbR$, we have $\udim(\uniflevset^\alpha_{ \steplenpot, v})=\udim(\levelset^\alpha_{ \steplenpot, v})$.
    \end{prop}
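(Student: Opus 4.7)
The direction $\udim(\unifrecset_\steplenpot) \leq \udim(\recset_\steplenpot)$ is immediate from $\unifrecset_\steplenpot \subseteq \recset_\steplenpot$ together with monotonicity of the $\geopot$-dimension. When $0 \in (\steplenlo,\steplenup)$, the reverse inequality follows from Theorem~\ref{thm:main:freq} applied with $\wordset = \emptyset$: the defining condition of $\freqappearset_\emptyset$ is then vacuous, so $\freqappearset_\emptyset = \shiftsp$, and the theorem reads $\udim(\unifrecset_\steplenpot) = \udim(\recset_\steplenpot)$. The whole content of the proposition is therefore what happens in the boundary case $0 \in \{\steplenlo,\steplenup\}$, and this is where the proof has to work.

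Since both $\unifrecset_\steplenpot$ and $\recset_\steplenpot$ are invariant under $\steplenpot \mapsto -\steplenpot$, I may assume $\steplenlo = 0$, which is equivalent to $\sup_\mu \int \steplenpot\, d\mu = 0$ (the supremum ranging over $\lshift$-invariant Borel probabilities). My plan is to produce a compact $\lshift$-invariant subset of $\unifrecset_\steplenpot$ that carries every maximizing measure, and then close the argument via the dimension variational principle. Invoking Bousch's subaction theorem for H\"older potentials on topologically mixing SFTs, there exists a H\"older continuous $\phi:\shiftsp \to \bbR$ satisfying
\begin{equation*}
	\steplenpot + \phi\circ\lshift - \phi \leq 0 \quad\text{pointwise on } \shiftsp,
\end{equation*}
and I set $\aubry$ to be the maximal $\lshift$-invariant subset of $\{\steplenpot + \phi\circ\lshift - \phi = 0\}$, which is compact.

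The rest of the argument rests on two observations. First, on $\aubry$ the telescoping identity $S_n\steplenpot = \phi - \phi\circ\lshift^n$ yields $|S_n\steplenpot| \leq 2\|\phi\|_\infty$ for every $n$, hence $\aubry \subseteq \unifrecset_\steplenpot$. Second, any $\lshift$-invariant $\mu$ with $\int \steplenpot\, d\mu = 0$ satisfies $\int(\steplenpot + \phi\circ\lshift - \phi)\, d\mu = 0$ once the coboundary term is cancelled by $\lshift$-invariance, and as the integrand is pointwise non-positive, it must vanish $\mu$-a.e.; $\lshift$-invariance of $\mu$ then upgrades this to $\mu(\aubry) = 1$. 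The multifractal formalism for Gibbs measures at $\alpha = 0$ identifies
\begin{equation*}
	\udim(\recset_\steplenpot) = \sup\Set{\frac{h_\mu(\lshift)}{\int \geopot\, d\mu} | \mu \text{ ergodic invariant},\ \int \steplenpot\, d\mu = 0},
\end{equation*}
every such ergodic $\mu$ carries $\aubry$, and the dimension variational principle applied through the $\mu$-generic points (which lie $\mu$-a.e.\ in $\aubry$) delivers $\udim(\aubry) \geq h_\mu(\lshift)/\int \geopot\, d\mu$ for each such $\mu$. Taking the supremum gives $\udim(\aubry) \geq \udim(\recset_\steplenpot)$, and then $\aubry \subseteq \unifrecset_\steplenpot$ closes the case.

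The main non-trivial input is the existence of the subaction $\phi$ in the boundary case, which is imported from ergodic optimization rather than proved from scratch. Everything else --- the telescoping identity on $\aubry$, the $\mu$-a.e.\ vanishing of the non-negative defect, and the variational lower bound for $\udim(\aubry)$ via generic points --- is routine once $\phi$ is in hand. I expect the place that most needs care is verifying that the version of the variational principle cited for $\udim(\recset_\steplenpot)$ remains valid precisely at the endpoint $\alpha = 0$; if it does not hold verbatim there, one can replace it by an approximation argument using $\udim(\recset_{\steplenpot + \alpha\geopot})$ for $\alpha \searrow 0$ and the continuity of the dimension spectrum on the interior, which then transfers the required lower bound to the set of measures carried by $\aubry$.
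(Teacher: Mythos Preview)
Your argument is correct and shares its core mechanism with the paper's proof: both invoke the sub-action $\phi$ with $\steplenpot+\phi\circ\lshift-\phi\le 0$ (Proposition~\ref{prop:subact}), observe that any invariant measure $\mu$ with $\int\steplenpot\,\rmd\mu=0$ must satisfy $\steplenpot+\phi\circ\lshift-\phi=0$ $\mu$-a.e., deduce $|S_n\steplenpot|\le 2\|\phi\|$ on a set of full $\mu$-measure, and conclude via the $\geopot$-dimension of such a measure.

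The packaging differs in a way worth noting. You route the argument through the Aubry set $\aubry$ and a conditional variational principle $\udim(\recset_\steplenpot)=\sup\{h_\mu(\lshift)/\int\geopot\,\rmd\mu:\int\steplenpot\,\rmd\mu=0\}$, which forces you to worry about whether that formula holds at the endpoint. The paper sidesteps this entirely: it takes the single measure $\gibbsmeas_{-\infty}$ (a weak$^*$ accumulation point of $(\gibbsmeas_q)_{q\le 0}$), for which Theorem~\ref{thm:pw} already records both $\int\steplenpot\,\rmd\gibbsmeas_{-\infty}=0$ and $\udim(\gibbsmeas_{-\infty})=\udim(\recset_\steplenpot)$ at the endpoint, and shows directly that $\gibbsmeas_{-\infty}(\unifrecset_\steplenpot)=1$. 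This is strictly shorter and eliminates the endpoint concern you flag at the end. Conversely, your Aubry-set formulation yields a slightly stronger intermediate statement (a compact invariant subset of $\unifrecset_\steplenpot$ carrying every maximizing measure), which is not needed here but could be useful elsewhere. The trivial case $0\notin[\steplenlo,\steplenup]$, where both sets are empty, is handled explicitly in the paper and only implicitly by you.
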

	

    The case where $\alpha\in\set{\steplenlo{\steplenpot}{v},\steplenup{\steplenpot}{v}}$ will be called the \emph{boundary case}, and the case where $\steplenlo{\steplenpot}{v}<\alpha<\steplenup{\steplenpot}{v}$ will be called the \emph{interior case}. The arguments in \cite{GJK} are well suited for the interior case. This enables us to develop the arguments in \cite{GJK} so as to show that two types of subsets of $\uniflevset^\alpha_{\steplenpot,v}$ also have the same $\geopot$-dimension as $\levelset^\alpha_{\steplenpot,v}$ for the interior case. The precise statements are to be given in Theorem \ref{thm:main:freq} and Theorem \ref{thm:main:notfreq}, which are the main results of this paper.
		
    Let $\wordset$ be a finite set of admissible words. Define for each positive integer $k$,
    \begin{equation*}
        \freqappearset_{\wordset,k} \defeq\bigcap_{n=0}^\infty\set{\xi\in\shiftsp|\text{all the words in }\wordset\text{ are the subwords of }\xi_{n+1}\cdots\xi_{n+k}},
    \end{equation*}
    and then set $\freqappearset_\wordset\defeq\bigcup_{k=1}^\infty\freqappearset_{\wordset,k}$. In words, an element of $\freqappearset_\wordset$ is an admissible sequence in which all words from $\wordset$ appears regularly. With $\freqappearset_\wordset$ defined, we are now able to state our first main theorem.
	\begin{thm}\label{thm:main:freq}
        Let $\lshift:\shiftsp\rightarrow\shiftsp$ be a topologically transitive SFT and let the functions $\steplenpot:\shiftsp\rightarrow\bbR$, $v:\shiftsp\rightarrow(-\infty,0)$ and $\geopot:\shiftsp\rightarrow(0,+\infty)$ be \holder continuous. Suppose that $\wordset$ is a finite set of admissible words over $\symbset$. Then, for any $\alpha\notin\set{\steplenlo{\steplenpot}{v},\steplenup{\steplenpot}{v}}$, we have
        \begin{equation}
            \udim( \uniflevset^\alpha_{ \steplenpot,v} \cap\freqappearset_\wordset)=\udim(\levelset^\alpha_{\steplenpot,v}).
		\end{equation}
    \end{thm}
		
    In order to state the second main theorem, for every non-negative integer $n$, we define the $n$-th power of a word $\omega$ as $\omega^n\defeq\omega\cdots\omega$, where the right-hand side is the $n$-fold concatenation of $\omega$. More formally, define $\omega^0$ as the empty word, and for each positive integer $n$, define $\omega^n\defeq\omega^{n-1}\omega$. Define $\notfreq_\wordset\defeq\bigcup_{l=1}^\infty\notfreq_{\wordset,l}$, where for any positive integer $l$,
    \begin{equation*}
        \notfreq_{\wordset,l} \defeq\bigcap_{\omega\in\wordset}\Set{\xi\in\shiftsp|\xi\text{ does not contain }\omega^l\text{ as a subword}}.
    \end{equation*}
    In words, the sequences in $\notfreq_\wordset$ are those in which none of the words in $\wordset$ appears with arbitrarily high power. Our second main theorem then claims the following.
	\begin{thm}\label{thm:main:notfreq}
		Let $\lshift:\shiftsp\rightarrow\shiftsp$ be a topologically transitive SFT and let the functions $\steplenpot:\shiftsp\rightarrow\bbR$, $v:\shiftsp\rightarrow(-\infty,0)$ and $\geopot:\shiftsp\rightarrow(0,+\infty)$ be \holder continuous. Suppose that $\wordset$ is a finite set of admissible words over $\symbset$. Then, for any $\alpha\notin\set{\steplenlo{\steplenpot}{v},\steplenup{\steplenpot}{v}}$, we have
        \begin{equation}
            \udim( \uniflevset^\alpha_{ \steplenpot,v} \cap\notfreq_\wordset)=\udim(\levelset^\alpha_{\steplenpot,v}).
		\end{equation}
    \end{thm}
    \begin{rmk}
		Evidently, all the powers of an admissible word $\omega$ are admissible if and only if $\omega^2=\omega\omega$ is admissible. If we define $\tilde{\wordset}\defeq\set{\omega\in\wordset|\omega^2\text{ is admissible}}$, we will clearly have $\notfreq_{\tilde{\wordset}}=\notfreq_\wordset$. Hence, without loss of generality, when we check the validity of Theorem \ref{thm:main:notfreq}, we may assume that $\wordset$ is a (possibly empty) finite set of words whose second powers are all admissible.
    \end{rmk}


    We assumed that $v$ is a negative \holder continuous function in Proposition \ref{prop:main:rec}, Theorem \ref{thm:main:freq} and Theorem \ref{thm:main:notfreq}. In fact, it is not difficult to see that the assertions remain true if $v$ is positive.

    Theorem \ref{thm:main:freq} and Theorem \ref{thm:main:notfreq} have the following corollary, whose proof will be presented in Subsection \ref{subsec:cor:freqnotfreqmaxdim}.
    \begin{cor} \label{cor:freqnotfreqmaxdim}
        Let $\lshift:\shiftsp\rightarrow\shiftsp$ be a topologically transitive SFT and $\wordset$ be a finite set of admissible words. Then, we have $\udim(\freqappearset_\wordset)=\udim(\notfreq_\wordset)=\udim(\shiftsp)$.
    \end{cor}
    
    Both Theorem \ref{thm:main:freq} and Theorem \ref{thm:main:notfreq} are claimed only for the non-boundary case. As we shall see in Proposition \ref{prop:counterex}, Theorem \ref{thm:main:freq} fails in general for the boundary case. As for Theorem \ref{thm:main:notfreq}, it is not yet clear whether the claim always holds for the boundary case.

    As an application, we study the \holder regularity of a Gibbs measure on a one-dimensional manifold. A one-dimensional connected (second-countable) manifold without boundary must be either the unit circle $\bbS^1$ or the real line $\bbR$. In what follows, we will use $\onedimmani$ to denote either $\bbS^1$ or $\bbR$.
    
    Let $(\vertexset,\edgeset)$ be a directed graph, where $\vertexset$ is a finite set of vertices and $\edgeset$ is a finite set of edges. We assume that $\edgeset$ contains at least two edges. We allow the existence of multiple directed edges with the same initial and terminal vertices. We always assume that the graph $(\vertexset,\edgeset)$ is strongly connected, meaning that for any ordered pair of vertices $(p,p')$, there exists at least one path from $p$ to $p'$. Let $\intervalfamily=\set{\interval_p|p\in\vertexset}$ be a family of compact intervals of positive length in $\onedimmani$. To each directed edge $e\in\edgeset$, whose initial vertex and terminal vertex are $\initialv(e)$ and $\terminalv(e)$ respectively, we associate one contraction $\contraction_e:\interval_{\terminalv(e)}\rightarrow\interval_{\initialv(e)}$. Then, $\GDMS\defeq\set{\contraction_e|e\in\edgeset}$ is called a graph directed Markov system (GDMS) \cite{gdms}. We shall further impose the open set condition and a condition on the smoothness of these contractions in Subsection \ref{subsec:app:setup}. With these conditions assumed, the GDMS $\GDMS$ satisfies the axioms of a conformal graph directed Markov system (CGDMS) in \cite{gdms}. The definition of a CGDMS in a more general setup is given in \cite{gdms}.
    
    There is a natural SFT $\lshift:\shiftsp\rightarrow\shiftsp$ associated with the aforementioned CGDMS. The set of symbols is $\edgeset$, and the incidence matrix $\incmat:\edgeset\times\edgeset\rightarrow\set{0,1}$ is defined by setting $\incmat(e,e')=1$ if and only if $\terminalv(e)=\initialv(e')$, for any two edges $e,e'\in\edgeset$. The coding map $\coding:\shiftsp\rightarrow\bbS^1$ is defined by
    \begin{equation*}
        \set{\coding(\xi)}= \bigcap_{k=1}^\infty\contraction_{\xi_1}\cdots\contraction_{\xi_k}(\interval_{\terminalv(\xi_k)}),
    \end{equation*}
    for every $\xi\in\shiftsp$. The limit set of the CGDMS is $\attractor\defeq\coding(\shiftsp)$. A Gibbs measure $\gibbsmeas$ on $\shiftsp$ can thus be transferred to its pushforward measure $\coding_*\gibbsmeas$ on $\attractor$ through the coding map $\coding$. We shall call $\coding_*\gibbsmeas$ a Gibbs measure on $\attractor$.
    
    For each $e\in\edgeset$, define $\interval_e\defeq\contraction_e(\interval_{\terminalv(e)})\subseteq \interval_{\initialv(e)}$. By the open set condition and the assumption that $\edgeset$ contains at least two edges, we have $\interval_e\subsetneq\onedimmani$. Consequently, for any two $x,y\in\interval_e$, there exists a unique minimal subinterval of $\interval_e$ containing both $x$ and $y$, which will be denoted by $[x,y]_e$. Given a Gibbs measure supported on $\attractor$ and $\alpha\geq 0$, for any $e\in\edgeset$ and any $x$ in the interior of $\interval_e$, define
    \begin{align*}
        \lodert^\alpha \coding_*\gibbsmeas(x) &\defeq\lim_{\varepsilon\rightarrow 0}\inf\Set{\frac{\coding_*\gibbsmeas([x,y]_e)}{d_{\onedimmani}(x,y)^\alpha}|y\in\interval_e,\,d_{\onedimmani}(x,y)<\varepsilon};\\
        \updert^\alpha \coding_*\gibbsmeas(x) &\defeq\lim_{\varepsilon\rightarrow 0}\sup \Set{\frac{\coding_*\gibbsmeas([x,y]_e)}{d_{\onedimmani}(x,y)^\alpha}|y\in\interval_e,\,d_{\onedimmani}(x,y)<\varepsilon}.
    \end{align*}
    When $\onedimmani=\bbR$, $d_{\onedimmani}$ is the standard Euclidean metric; when $\onedimmani=\bbS^1$, $d_{\onedimmani}$ is the Euclidean metric on the unit circle. By the open set condition we assumed, if $x$ is in the interior of some $\interval_e$, then this $e$ is unique. Therefore, $\lodert^\alpha \coding_*\gibbsmeas(x)$ and $\updert^\alpha \coding_*\gibbsmeas(x)$ does not depend on the choice of $e$. The set to which we shall perform dimension analysis is
    \begin{equation*}
        \moderate^\alpha_\gibbsmeas \defeq\Set{x\in\onedimmani\setminus\bigcup_{e\in\edgeset}\partial\interval_e|0<\lodert^\alpha\coding_*\gibbsmeas(x)\leq\updert^\alpha\coding_*\gibbsmeas(x)<+\infty}.
    \end{equation*}
    Since $\edgeset$ is assumed to be a finite set, we have $\bigcup_{e\in\edgeset}\partial\interval_e$ is also a finite set. Hence, it is acceptable to exclude the points in $\bigcup_{e\in\edgeset}\partial\interval_e$ from a dimension-theoretic perspective.
    
    Note that for every $x\notin\attractor$, we have $\lodert^\alpha \coding_*\gibbsmeas(x)=\updert^\alpha \coding_*\gibbsmeas(x)=0$, because some neighbourhood of $x$ is disjoint from $\attractor$, and is thus a $\coding_*\gibbsmeas$-null set. Hence, we have
    \begin{equation*}
        \moderate^\alpha_\gibbsmeas \defeq\Set{x\in\attractor\setminus\bigcup_{e\in\edgeset}\partial\interval_e|0<\lodert^\alpha\coding_*\gibbsmeas(x)\leq\updert^\alpha\coding_*\gibbsmeas(x)<+\infty},
    \end{equation*}
    meaning that we may restrict our attention to $x\in\attractor$.
    
    When $\onedimmani=\bbR$, the measure $\coding^*\gibbsmeas$ is represented by its cumulative distribution function $\cdf_\gibbsmeas$. By definition, $\lodert^\alpha \coding_*\gibbsmeas(x)$ and $\updert^\alpha \coding_*\gibbsmeas(x)$ are precisely the lower and upper $\alpha$-\holder derivatives of $\cdf_\gibbsmeas$, which appear in \cite{kessestratholder}. For $\onedimmani=\bbS^1$, although the cumulative distribution function is not well-defined, yet we can still make a similar argument locally. Thus, we can see that $\moderate^\alpha_\gibbsmeas$ is a natural set to consider. The elements of $\moderate^\alpha_\gibbsmeas$ are the points $x$ for which $\coding_*\gibbsmeas([x,y]_e)$ changes neither too rapidly nor too slowly compared with $d_{\onedimmani}(x,y)^\alpha$ as $y$ varies in a small neighbourhood of $x$.
    
    Our result on the Hausdorff dimension of $\moderate^\alpha_\gibbsmeas$ is the following theorem.
    \begin{thm}\label{thm:app}
        Let $\GDMS\defeq\set{\contraction_e|e\in\edgeset}$ be a CGDMS satisfying all the conditions we assume in Subsection \ref{subsec:app:setup}. Let $\lshift:\shiftsp\rightarrow\shiftsp$ be the topologically transitive SFT associated with the CGDMS $\GDMS$. Let $\steplenpot:\shiftsp\rightarrow\bbR$ be a \holder continuous function satisfying $P(\steplenpot)=0$. Let $\gibbsmeas$ be a Gibbs measure of $\steplenpot$. Define $v:\shiftsp\rightarrow\bbR$ by
        \begin{equation} \label{eqn:geopotdef}
            v(\xi)\defeq \log|\contraction'_{\xi_1}(\coding(\xi))|<0
        \end{equation}
        for any $\xi\in\shiftsp$, and set $\geopot:\shiftsp\rightarrow(0,+\infty)$ to be $-v$. Then, for any $\alpha\notin \set{\steplenlo{\steplenpot}{v},\steplenup{\steplenpot}{v}}$,
        \begin{equation*}
            \hausdim( \moderate^\alpha_\gibbsmeas)=\udim(\levelset^\alpha_{\steplenpot,v}).
        \end{equation*}
    \end{thm}

    The key observation for showing Theorem \ref{thm:app} is the following inclusion in Proposition \ref{prop:der}:
    \begin{equation} \label{eqn:prop:der}
        \uniflevset^\alpha_{ \steplenpot,v}\cap\notfreq_\wordset\subseteq\coding^{-1}\left(\moderate^\alpha\right)\subseteq\uniflevset^\alpha_{\steplenpot,v},
    \end{equation}
    where $\alpha$ is an arbitrary real number, and $\wordset$ is a finite set of admissible words to be defined in Section \ref{sec:app}. This suggests that Theorem \ref{thm:main:notfreq} can be applied to prove Theorem \ref{thm:app}. Following this idea, we shall give a proof in Section \ref{sec:app}.
    
    Recall that the multifractal analysis of Gibbs measures shows that there exists a unique $\alpha_0\in[\steplenlo{\steplenpot}{v},\steplenup{\steplenpot}{v}]$ such that $\udim(\levelset^{\alpha_0}_{\steplenpot,v})=\udim(\shiftsp)$ \cite{pesinweiss}. Thus, this fact and Theorem \ref{thm:app} suggest the following claim.
    \begin{cor}\label{cor:maxdim}
        Assume that $\hausdim(\attractor)>0$. Then, there is a unique $\alpha_0\in\bbR$ such that $\hausdim( \moderate^{\alpha_0}_\gibbsmeas) =\hausdim(\attractor)$. If $\steplenlo{\steplenpot}{v}=\steplenup{\steplenpot}{v}$, then $\alpha_0=\steplenlo{\steplenpot}{v}=\steplenup{\steplenpot}{v}$. Otherwise, $\steplenlo{\steplenpot}{v}<\alpha_0<\steplenup{\steplenpot}{v}$.
    \end{cor}
        
    
    There have been many known results on the \holder regularity of Gibbs measures on the real line. Since a probability measure on the real line is completed determined by its cumulative distribution function, we shall state the known results in terms of Gibbs distribution functions rather than Gibbs measures. In what follows, let $\steplenpot:\shiftsp\rightarrow\bbR$ be a \holder continuous function satisfying that $P(\steplenpot)=0$. Let $\gibbsmeas$ be a Gibbs measure of $\steplenpot$. Until the end of the this introduction, we only consider the case where $\onedimmani=\bbR$. Hence, $\coding_*\gibbsmeas$ is a measure on $\bbR$. Let $\cdf:\bbR\rightarrow[0,+\infty)$ denote the cumulative distribution function of $\coding_*\gibbsmeas$.
    
    There are some articles studying the \holder differentiability of $\cdf$, but with a focus on sets different from what we considered in Theorem \ref{thm:app}. As an example, Kesseb\"ohmer and Stratmann evaluated in \cite{kessestratholder} the Hausdorff dimension of the set
    \begin{equation*}
        \set{x\in\bbR| \lodert^\alpha\cdf(x)<\updert^\alpha\cdf(x)=+\infty}.
    \end{equation*}
    At these points, $\cdf$ fails to be $\alpha$-\holder differentiable.
	
    There are also a series of articles studying the pointwise \holder exponent of the distribution function, which are closely related to our result. In \cite{patzschke}, Patzschke showed that the Hausdorff dimension of
    \begin{equation} \label{eqn:patzschkeset}
        \mathcal{E}^\alpha\defeq \Set{x\in\bbR |\lim_{y\rightarrow x}\frac{\log|\cdf(x)-\cdf(y)|}{\log|x-y|}=\alpha}
    \end{equation}
    is equal to $\udim(\levelset^\alpha_{\steplenpot,-\geopot})$, where $\geopot$ is the geometric potential defined in the same way as in Theorem \ref{thm:app}. As a continuation of this result, Jaerisch and Sumi studied extensively in \cite{JaerischSumi} various types of sets that contain $\mathcal{E}^\alpha$ as a subset, including
    \begin{align*}
        &\Set{x\in\bbR| \liminf_{y\rightarrow x}\frac{\log|\cdf(x)-\cdf(y)|}{\log|x-y|}=\alpha};\\
        &\Set{x\in\bbR| \limsup_{y\rightarrow x}\frac{\log|\cdf(x)-\cdf(y)|}{\log|x-y|}=\alpha};\\
        &\Set{x\in\bbR|\exists (y_n)_{n=1}^\infty\text{ converging to }x\text{ such that } \lim_{n\rightarrow+\infty}\frac{\log|\cdf(x)-\cdf(y_n)|}{\log|x-y_n|}=\alpha}.
    \end{align*}
    They proved that all these sets have the same Hausdorff dimension as $\mathcal{E}^\alpha$. The Hausdorff dimension and also the packing dimension of some sets even larger than the sets in \cite{JaerischSumi} are given in \cite{bos}. Different from \cite{bos} and \cite{JaerischSumi}, in which the sets larger than $\mathcal{E}^\alpha$ are considered, the set $\moderate^\alpha_\gibbsmeas$ we considered in Theorem \ref{thm:app} is actually a subset of $\mathcal{E}^\alpha$. Hence, our result is a continuation of the previous works on the pointwise \holder exponent.

    It is also worth noticing that the results in \cite{JaerischSumi} give the Hausdorff dimension of
    \begin{equation*}
        \mathcal{H}^\alpha\defeq \Set{x\in\bbR| \sup\Set{\gamma>0| \updert^{\gamma}\cdf(x)<+\infty}=\alpha}.
    \end{equation*}
    This set is interesting in that at points in $\mathcal{H}^\alpha$, $\alpha$ can be regarded as the exact \holder exponent locally. Indeed, the Hausdorff dimension of $\mathcal{H}^\alpha$ is also equal to the Hausdorff dimension of $\mathcal{E}^\alpha$ \cite{JaerischSumi}. Our Theorem \ref{thm:app} shows that
    \begin{equation*}
        \Set{x\in\bbR|0< \lodert^\alpha\cdf(x)\leq\updert^\alpha\cdf(x)<+\infty},
    \end{equation*}
    which is a subset of $\mathcal{H}^\alpha$, has the same Hausdorff dimension as $\mathcal{H}^\alpha$. Therefore, our result also complements \cite{JaerischSumi} in this sense.


    The rest of this paper is organized as follows. Section \ref{sec:known} serves as a summary of the known results we shall use. Section \ref{sec:dimspecunif} gives all the proofs of our results stated for symbolic dynamics, including the proofs of the two main theorems, namely Theorem \ref{thm:main:freq} and Theorem \ref{thm:main:notfreq}. Besides, remarks on the boundary case will be made at the end of Section \ref{sec:dimspecunif}. In Section \ref{sec:app}, we perform a multifractal analysis of \holder regularity for Gibbs measures on one-dimensional manifolds. The details of the setup and the proof of Theorem \ref{thm:app} will be provided in Section \ref{sec:app}. Appendix~\ref{app:key} gives the proofs of two lemmas we claimed in Section \ref{sec:dimspecunif}.
    
    \emph{Acknowledgements.} The author would like to thank Johannes Jaerisch for his invaluable comments and suggestions on this paper.

\section{Preliminaries}\label{sec:known}
    This section lays the background for our subsequent discussions, especially for the proofs in Section \ref{sec:dimspecunif}.
    
    \subsection{Notations and Basic Facts about SFT}\label{subsec:pre:sft}
        An SFT is specified by a finite set $\symbset$ of symbols and an incidence matrix $\incmat:\symbset\times \symbset\rightarrow\set{0,1}$ satisfying that for each $a\in\symbset$, there exists some $b\in\symbset$ such that $\incmat(a,b)=1$. We shall always assume that $\symbset$ contains at least two symbols. A non-empty word (of finite length) or a sequence (of infinite length) over $\symbset$ is called admissible if any of its subwords of length two, say $ab$, satisfies that $\incmat(a,b)=1$. It is vacuously true that the empty word as well as all words of length one are admissible. Then the shift space $\shiftsp\subseteq\symbset^\bbN$ is the set of all admissible sequences over $\symbset$. Here, $\bbN$ denotes the set of positive integers. The (one-sided) shift $\lshift:\shiftsp\rightarrow\shiftsp$ is defined by $\lshift(\xi_1\xi_2\cdots)\defeq\xi_2\xi_3\cdots$, for any $\xi=\xi_1\xi_2\cdots \in\shiftsp$. The set of all the admissible words of length $n$ will be denoted by $\symbset^n_\incmat$. Let $\symbset^*_\incmat\defeq\bigcup_{n=0}^\infty\symbset^n_\incmat$ be the set of all admissible words.
			
        The topology of $\shiftsp$ is generated by the cylinder sets. For any $\omega\in\symbset^*_\incmat$, define the cylinder set of $\omega$ as $[\omega]\defeq\set{\xi\in\shiftsp|\xi_1\cdots\xi_{|\omega|}=\omega}$, where $|\omega|$ denotes the length of $\omega$. If $\omega$ is the empty word, then $[\omega]=\shiftsp$. An $n$-cylinder set is a cylinder set given by a word of length $n$. The collection of all the cylinder sets forms a basis that generates a topology. Endowed with this topology, $\shiftsp$ is compact and metrizable. In the next subsection, we shall define a family of metrics compatible with this topology.
			
    \subsection{Metrics on Shift Space}\label{subsec:metricshiftsp}
        In this subsection, we define a family of metrics. Throughout this paper, the metric on $\shiftsp$ will always be picked from this family.
		
        One standard choice of a metric on $\shiftsp$ is $\symbdist_1$, defined by $\symbdist_1(\xi,\xi')\defeq\exp(-|\xi\wedge\xi'|)$ for any two distinct $\xi,\xi'\in\shiftsp$, where $\xi\wedge\xi'$ denotes the longest common initial block shared by $\xi$ and $\xi'$. Clearly, $d_1$ induces the topology generated by the cylinder sets.
	
        For any two metric spaces $(X,d_X)$ and $(Y,d_Y)$, a map $f:X\rightarrow Y$ is said to be \holder continuous if there exists some \holder exponent $\alpha\in\bbR$ such that
        \begin{equation*}
            \sup \Set{\frac{d_Y(f(x),f(x'))}{d_X(x,x')^\alpha}|x,x'\in X,\,x\neq x'}<+\infty.
        \end{equation*}
        Let $f:\shiftsp\rightarrow\bbR$ be a \holder continuous function, where $\shiftsp$ and $\bbR$ are endowed with $\symbdist_1$ and the Euclidean metric respectively. A prominent feature of a \holder continuous function $f$ is the bounded distortion property, which states that the distortion constant
        \begin{equation*}
            \distconst_f\defeq \sup_{\omega\in\symbset^*_\incmat}\sup_{\xi,\xi'\in[\omega]}\left|S_{\left|\omega\right|}f(\xi)-S_{\left|\omega\right|}f(\xi')\right|
		\end{equation*}
        is finite. For any admissible word $\omega$, we define
        \begin{equation*}
            S_\omega f\defeq\sup\set{S_{\left|\omega\right|}f(\xi)|\xi\in[\omega]}.
        \end{equation*}
        Thus, by the bounded distortion property of the \holder continuous function $f$, for every $\xi\in[\omega]$, we have $S_\omega f-\distconst_f\leq S_{|\omega|}f(\xi)\leq S_\omega f$.
	
        Now we associate a metric $d_\geopot$ to every \holder continuous $\geopot:\shiftsp\rightarrow(0,+\infty)$, where the metric of $\shiftsp$ is still taken to be $d_1$. Define a metric $\symbdist_\geopot$ by setting
        \begin{equation*}
            \symbdist_\geopot (\xi,\xi')\defeq \exp(S_{\xi\wedge\xi'}(-\geopot)),
        \end{equation*}
        for any two distinct $\xi,\xi'\in\shiftsp$. When $\geopot$ is constantly equal to $1$, we can readily see that $\symbdist_\geopot$ is precisely $\symbdist_1$, so this definition is consistent with the previously introduced notation $\symbdist_1$.

        The \holder continuity of any function $f$ does not depend on which metric we pick from the family we have just defined.
        \begin{prop} \label{prop:holdequiv}
            Let $\geopot:\shiftsp\rightarrow(0,+\infty)$ be \holder continuous with respect to $d_1$. Then, for any $f:\shiftsp \rightarrow\bbR$, $f$ is \holder continuous with respect to $d_1$ if and only if it is \holder continuous with respect to $d_\geopot$.
        \end{prop}
        \begin{proof}
            Since $\geopot$ is a positive continuous function on the compact space $\shiftsp$, we have
            \begin{equation*}
                0< \min_{\zeta \in\shiftsp}\geopot(\zeta) \leq\max_{\zeta \in\shiftsp}\geopot(\zeta)<+\infty.
            \end{equation*}
            Also note that, for any $\xi,\xi'\in\shiftsp$,
            \begin{equation*}
                \min_{\zeta \in\shiftsp} \geopot(\zeta)\log\symbdist_1(\xi,\xi')\leq\log\symbdist_\geopot(\xi,\xi')\leq\max_{\zeta\in\shiftsp}\geopot(\zeta)\log\symbdist_1(\xi,\xi').
            \end{equation*}
            From these facts and the definition of \holder continuity, our claim follows.
        \end{proof}
        
        Recall that we have fixed a \holder continuous function $\geopot:\shiftsp\rightarrow(0,+\infty)$ with respect to $d_1$. The metric $\symbdist_\geopot$ assigns to every subset of $\shiftsp$ a non-negative number as its Hausdorff dimension. In \cite{barsch}, the authors introduced the notion of $\geopot$-dimension as an extension of the notion of the Hausdorff dimension. In our setting, we always assume the \holder continuity of $\geopot$ with respect to $d_1$. Then, the $\geopot$-dimension $\udim(E)$ of a set $E\subseteq\shiftsp$ coincides with its Hausdorff dimension $\hausdim(E)$, if we take the metric of $\shiftsp$ to be $d_\geopot$. In what follows, we shall study the $\geopot$-dimension of subsets of $\shiftsp$. Thus, it is, on the one hand, convenient for us to take $d_\geopot$ as the default metric on $\shiftsp$. On the other hand, a majority of the authors working in thermodynamic formalism usually assume \holder continuity with respect to $\symbdist_1$. By Proposition \ref{prop:holdequiv}, the different choices of the metrics in fact do not make any difference when we speak of the \holder continuity of a function.
        
        Lastly, we assume that the SFT we consider is always topologically transitive. It is broadly known that the SFT is topologically transitive if and only if there exists a finite set $\infixset$ of admissible words such that for any two symbols $a,b\in\symbset$, we can find some $\rho\in\infixset$ such that $a\rho b$ is admissible.

        For a topological transitive SFT $\lshift:\shiftsp\rightarrow\shiftsp$, we have the following classical results from thermodynamic formalism. Most of the textbooks on thermodynamic formalism contain these results; for instance, see \cite{pu}. Take an arbitrary \holder continuous function $f:\shiftsp\rightarrow\bbR$. The topological pressure of $f$ is
        \begin{equation*}
            P(f)\defeq \lim_{n\rightarrow+\infty}\frac{1}{n}\log\sum_{|\omega|=n}\exp(S_\omega f).
        \end{equation*}
        A Gibbs measure of potential $f$ is a Borel probability measure $\gibbsmeas$ admitting the existence of some $C_\gibbsmeas\geq 1$ such that for any non-empty admissible word $\omega$,
        \begin{equation*}
            C_\gibbsmeas^{-1} \exp(S_\omega f-|\omega|\cdot P(f)) \leq\gibbsmeas([\omega])\leq C_\gibbsmeas\exp(S_\omega f-|\omega|\cdot P(f)).
        \end{equation*}
        There is a standard argument using the Perron-Frobenius operators to show the existence of the $\lshift$-invariant Gibbs measure $\gibbsmeas$ whose potential is $f$. This $\lshift$-invariant Gibbs measure $\gibbsmeas$ is known to be ergodic. Moreover, $\gibbsmeas$ is also the unique equilibrium state of $f$, meaning that $P(f)=h(\gibbsmeas)+ \int_\shiftsp f\,\rmd\gibbsmeas$, where
        \begin{equation*}
            h(\gibbsmeas)\defeq \lim_{n\rightarrow+\infty}-\frac{1}{n}\sum_{|\omega|=n}\gibbsmeas([\omega])\log([\omega])
        \end{equation*}
        is the Kolmogorov-Sinai entropy of the measure $\gibbsmeas$.
    \subsection{Multifractal Analysis of Gibbs Measures} \label{subsec:pre:gibbs}
        In this subsection, we recall some widely-known facts established in the multifractal analysis of Gibbs measures. As in (\ref{eqn:steplenloupdef}), for $\steplenpot:\shiftsp\rightarrow\bbR$ and $v:\shiftsp\rightarrow(-\infty,0)$, we define
        \begin{equation*}
            \steplenlo{\steplenpot}{v}\defeq\inf_{\xi\in\shiftsp}\liminf_{n\rightarrow+\infty}\frac{S_n\steplenpot(\xi)}{S_nv(\xi)};\quad\steplenup{\steplenpot}{v}\defeq\sup_{\xi\in\shiftsp}\limsup_{n\rightarrow+\infty}\frac{S_n\steplenpot(\xi)}{S_nv(\xi)}.
        \end{equation*}
        The $\geopot$-dimension of a Borel measure $\mu$ on $\shiftsp$ is
        \begin{equation*}
            \udim(\mu)= \inf\set{\udim(E)|E\subseteq \shiftsp,\,\mu(\shiftsp\setminus E)=0}.
        \end{equation*}
        A thorough treatment of the dimension of a measure as well as its relation with the pointwise dimension of the measure can be found in \cite[Chapter 8]{pu}. In this paper, we shall omit the details.

        What we need from the multifractal analysis is summarized into the following theorem.
        \begin{thm}[{\cite{schmeling}}]\label{thm:pworiginal}
            Let $\lshift:\shiftsp\rightarrow\shiftsp$ be a topologically transitive SFT. Let $\steplenpot:\shiftsp\rightarrow\bbR$ and $\geopot:\shiftsp\rightarrow(0,+\infty)$ be \holder continuous functions. Then, $\levelset^\alpha_{\steplenpot,-\geopot}$ is non-empty if and only if $\steplenlo{\steplenpot}{-\geopot}\leq\alpha\leq\steplenup{\steplenpot}{-\geopot}$.
            
            For each $q\in\bbR$, there exists a unique $\fenerg(q)$ such that $P(q\steplenpot-\fenerg(q)\geopot)=0$. This defines a function $\fenerg:\bbR\rightarrow\bbR$. The function $\fenerg$ is real analytic and convex.
            
            The function $\fenerg$ gives an expression of $\udim(\levelset^\alpha_{\steplenpot,-\geopot})$ in the following manner. If we assume that $\steplenlo{\steplenpot}{-\geopot}<\steplenup{\steplenpot}{-\geopot}$, then the following statements hold.
            \begin{enumerate}
                \item   For every $\alpha\in(\steplenlo{\steplenpot}{-\geopot},\steplenup{\steplenpot}{-\geopot})$, there exists a unique $q_\alpha\in\bbR$ such that
                \begin{equation*}
                    \alpha=-\fenerg'(q_\alpha)=\frac{\int_\shiftsp\steplenpot\,\rmd\gibbsmeas_\alpha}{-\int_\shiftsp\geopot\,\rmd\gibbsmeas_\alpha},
                \end{equation*}
                where $\gibbsmeas_\alpha$ is the equilibrium state of the potential $q_\alpha\steplenpot-\fenerg(q_\alpha)\geopot$. We have $\gibbsmeas_\alpha(\levelset^\alpha_{\steplenpot,-\geopot})=1$, and the $\geopot$-dimension of $\levelset^\alpha_{\steplenpot,-\geopot}$ is given by
                \begin{equation*}
                    \udim( \levelset^\alpha_{\steplenpot,-\geopot})=\udim(\gibbsmeas_\alpha)=\fenerg(q_\alpha)+\alpha q_\alpha=\min_{q\in\bbR}\fenerg(q)+\alpha q.
                \end{equation*}
                \item   For every $\alpha\in\set{\steplenlo{\steplenpot}{-\geopot},\steplenup{\steplenpot}{-\geopot}}$, there exists a $\lshift$-invariant Borel probability measure $\gibbsmeas_\alpha$ such that $\gibbsmeas_\alpha(\levelset^\alpha_{\steplenpot,-\geopot})=1$ and
                \begin{equation*}
                    \udim( \levelset^\alpha_{\steplenpot,-\geopot})=\udim(\gibbsmeas_\alpha)=\inf_{q\in\bbR}\fenerg(q)+\alpha q.
                \end{equation*}
            \end{enumerate}
        \end{thm}

        \begin{rmk}
            It is assumed in \cite{schmeling} that $\lshift:\shiftsp\rightarrow\shiftsp$ is topologically mixing. In fact, Theorem \ref{thm:pworiginal} holds also for a topologically transitive SFT. Besides, it is also assumed in \cite{schmeling} that $P(\steplenpot)=0$. As all the claims in Theorem \ref{thm:pworiginal} still hold even if $\steplenpot$ does not have zero topological pressure, we also remove this assumption.
        \end{rmk}

        In the statement of Theorem \ref{thm:pworiginal}, the $\alpha$-level set is $\levelset^\alpha_{\steplenpot,-\geopot}$. Now we rewrite Theorem \ref{thm:pworiginal} to give dimension formulae and other statements about $\levelset^\alpha_{\steplenpot,v}$ as we considered in the introduction. Towards this end, for a given $\alpha$, we define $\fenerg_\alpha:\bbR\rightarrow\bbR$ in a way similar to how we defined $\fenerg$ in Theorem \ref{thm:pworiginal}.
        
        Let $\lshift:\shiftsp\rightarrow\shiftsp$ be a topologically transitive SFT. Let $\steplenpot:\shiftsp\rightarrow\bbR$, $v:\shiftsp\rightarrow\bbR$ and $\geopot:\shiftsp\rightarrow(0,+\infty)$ be \holder continuous functions. Assume that $v$ is either everywhere positive or everywhere negative. For every $q\in\bbR$, let $\fenerg_\alpha(q)$ be the unique real number for which $P(q(\steplenpot-\alpha v)-\fenerg_\alpha(q)\geopot)=0$. Evidently, if $v=-\geopot$, then we have $\fenerg_\alpha(q)=\fenerg(q)+\alpha q$ for any $q\in\bbR$, where $\fenerg$ is the function defined in Theorem \ref{thm:pworiginal}. For every $\alpha$, applying Theorem \ref{thm:pworiginal} to $\steplenpot-\alpha v$ in place of $\steplenpot$, then we can readily see that $\fenerg_\alpha$ is also real analytic and convex.

        Now we may rewrite Theorem \ref{thm:pworiginal} as follows.
        \begin{thm}\label{thm:pw}
            Let $\lshift:\shiftsp\rightarrow\shiftsp$ be a topologically transitive SFT. Let $\steplenpot:\shiftsp\rightarrow\bbR$, $v:\shiftsp\rightarrow(-\infty,0)$ and $\geopot:\shiftsp\rightarrow(0,+\infty)$ be \holder continuous functions. Then, $\levelset^\alpha_{\steplenpot,v}$ is non-empty if and only if $\steplenlo{\steplenpot}{v}\leq\alpha\leq\steplenup{\steplenpot}{v}$.
            
            Furthermore, if we assume that $\steplenlo{\steplenpot}{v}<\steplenup{\steplenpot}{v}$, then the following statements hold.
            \begin{enumerate}
                \item \label{item:pw:interior}   For any $\alpha\in(\steplenlo{\steplenpot}{v},\steplenup{\steplenpot}{v})$, there exists a unique $q_\alpha\in\bbR$ such that
                \begin{equation*}
                    \fenerg'_\alpha (q_\alpha) =\frac{\int_{\shiftsp}(\steplenpot-\alpha v)\,\rmd\gibbsmeas_\alpha}{\int_{\shiftsp}\geopot\,\rmd\gibbsmeas_\alpha}=0,
                \end{equation*}
                where $\gibbsmeas_\alpha$ is the equilibrium state of the potential $q_\alpha(\steplenpot-\alpha v)-\fenerg_\alpha(q_\alpha)\geopot$. We have $\gibbsmeas_\alpha(\levelset^\alpha_{\steplenpot,v})=1$ and
                \begin{equation*}
                    \udim( \levelset^\alpha_{\steplenpot,v})=\udim(\gibbsmeas_\alpha)=\min_{q\in\bbR}\fenerg_\alpha(q)=\fenerg_\alpha(q_\alpha).
                \end{equation*}
                \item \label{item:pw:bdy}   For any $\alpha\in\set{\steplenlo{\steplenpot}{v},\steplenup{\steplenpot}{v}}$, there is a $\lshift$-invariant Borel probability measure $\gibbsmeas_\alpha$ such that $\gibbsmeas_\alpha(\levelset^\alpha_{\steplenpot,v})=1$ and
                \begin{equation*}
                    \udim( \levelset^\alpha_{\steplenpot,v})=\udim( \gibbsmeas_\alpha)=\inf_{q\in\bbR}\fenerg_\alpha(q).
                \end{equation*}
            \end{enumerate}
        \end{thm}

        Theorem \ref{thm:pw} can be deduced from Theorem \ref{thm:pworiginal}. The critical observation is that $\levelset^\alpha_{\steplenpot,v}=\levelset^0_{\steplenpot-\alpha v,-\geopot}$ for every $\alpha$. Hence, the main idea of the proof is to replace $\steplenpot$ in Theorem \ref{thm:pworiginal} with $\steplenpot-\alpha v$. The complete proof is very tedious, so we omit it.
        
        The case where $\steplenlo{\steplenpot}{v}=\steplenup{\steplenpot}{v}$ is not covered by Theorem \ref{thm:pw}. Actually, this case will be well understood through the discussions in the next subsection. As we shall see in Corollary \ref{cor:bd}, if $\steplenlo{\steplenpot}{v}=\steplenup{\steplenpot}{v}$, then we have $\levelset^\alpha_{\steplenpot,v}=\shiftsp$ for $\alpha=\steplenlo{\steplenpot}{v}$, and $\levelset^\alpha_{\steplenpot,v}=\varnothing$ for $\alpha\neq\steplenlo{\steplenpot}{v}$.
    \subsection{Facts from Ergodic Optimization}
        When we deal with the boundary case, we will need some facts in ergodic optimization.
        \begin{thm}[\cite{ergopt}] \label{thm:subact}
            Let $\lshift:\shiftsp\rightarrow\shiftsp$ be a topologically transitive SFT. Let $\steplenpot:\shiftsp\rightarrow\bbR$ and $v:\shiftsp\rightarrow(-\infty,0)$ be \holder continuous. Then, 
            \begin{enumerate}
                \item   for $\alpha=\steplenlo{\steplenpot}{v}$, there exists a \holder continuous function $g_-:\shiftsp\rightarrow\bbR$ such that $\steplenpot-\alpha v+g_-\circ\lshift-g_-\leq 0$;
                \item   for $\alpha=\steplenup{\steplenpot}{v}$, there exists a \holder continuous function $g_+:\shiftsp\rightarrow\bbR$ such that $\steplenpot-\alpha v+g_+\circ\lshift-g_+\geq 0$.
            \end{enumerate}
        \end{thm}
        The functions $g_-$ and $g_+$ above are called sub-actions in the context of ergodic optimization. The existence of sub-actions relies on the \holder continuity of $\steplenpot$ \cite{ergopt}, which we always assume throughout this paper.
        
        Using the existence of sub-actions, we can show the following facts.
        \begin{cor}\label{cor:bd}
            Let $\lshift:\shiftsp\rightarrow\shiftsp$ be a topologically transitive SFT. Let $\steplenpot:\shiftsp\rightarrow\bbR$ and $v:\shiftsp\rightarrow(-\infty,0)$ be \holder continuous functions. Then,
            \begin{enumerate}
                \item   $\sup_{\xi\in\shiftsp}\sup_{n\geq 1}S_n\steplenpot(\xi)-\alpha S_nv(\xi)<+\infty$ when $\alpha=\steplenlo{\steplenpot}{v}$;
                \item   $\inf_{\xi\in\shiftsp}\inf_{n\geq 1}S_n\steplenpot(\xi)-\alpha S_nv(\xi)>-\infty$ when $\alpha=\steplenup{\steplenpot}{v}$;
                \item   $\uniflevset^\alpha_{\steplenpot,v}=\levelset^\alpha_{\steplenpot,v}=\shiftsp$ if $\steplenlo{\steplenpot}{v}=\steplenup{\steplenpot}{v}=\alpha$.
            \end{enumerate}
        \end{cor}
        \begin{proof}
            Suppose that $\alpha=\steplenlo{\steplenpot}{v}$. By Theorem \ref{thm:subact}, there is a sub-action $g_-:\shiftsp\rightarrow\bbR$ such that $\steplenpot-\alpha v+g_-\circ\lshift-g_-\leq 0$. Then, we have that for any $\xi\in\shiftsp$ and any positive integer $n$,
            \begin{equation*}
                S_n\steplenpot(\xi)-\alpha S_nv(\xi)= S_n(\steplenpot-\alpha v+g_-\circ\lshift-g_-)(\xi)+g_-(\lshift^n(\xi))-g_-(\xi)\leq 2\|g_-\|,
            \end{equation*}
            where $\|g_-\|\defeq\sup_{\xi\in\shiftsp}|g_-(\xi)|$ is the supremum norm of the continuous function $g_-$. This shows the first claim. Applying the first claim to $-\steplenpot$ and $v$, we have the second claim. Combining the first two assertions, we have $\uniflevset^\alpha_{\steplenpot,v}=\shiftsp$ when $\steplenlo{\steplenpot}{v}=\steplenup{\steplenpot}{v}=\alpha$. Since $\uniflevset^\alpha_{\steplenpot,v}\subseteq\levelset^\alpha_{\steplenpot,v}\subseteq\shiftsp$, the last claim holds.
        \end{proof}
\section{Proofs of Main Theorems and Related Remarks}\label{sec:dimspecunif}
    In this section, we fix a topological transitive SFT $\lshift:\shiftsp\rightarrow\shiftsp$ and three \holder continuous functions $\steplenpot:\shiftsp\rightarrow \bbR$, $v:\shiftsp\rightarrow(-\infty,0)$ and $\geopot:\shiftsp\rightarrow(0,+\infty)$. For every $\alpha\in\bbR$, we write $\steplenpot_\alpha$ to denote $\steplenpot-\alpha v$.
    
    Define $\|\cylchain\|\defeq\sup_{\omega\in\cylchain}|\omega|$ for any $\cylchain \subseteq\symbset^*_\incmat$. In addition, for any $K>0$, any $\alpha\in\bbR$ and any positive integer $m$, define
    \begin{equation*}
        \windowword^m_{\alpha,K} \defeq\Set{\omega\in\symbset^m_\incmat|\sup_{\xi\in[\omega]}\left|S_m\steplenpot_\alpha(\xi)\right|\leq K},
    \end{equation*}
    and $\windowword_{\alpha,K}\defeq\bigcup_{m=0}^\infty\windowword^m_{\alpha,K}$.
    \subsection{Proof of Theorem \ref{thm:main:freq}}
        We first prove Theorem \ref{thm:main:freq}. Towards this end, we state three lemmas. The proofs of the first two will be provided in Appendix \ref{app:key}. The last lemma is a known fact, so the proof will be omitted.

        \begin{lma} \label{lma:infixset}
            Suppose that $\udim(\shiftsp)>0$. Then, there exists a finite set $\infixset$ of non-empty admissible words satisfying that
            \begin{enumerate}
                \item   for any two symbols $a,b\in\symbset$, there exists some $\rho\in\infixset$ such that $a\rho b$ is admissible;
                \item   for any two distinct words $\rho,\rho'\in\infixset$, the cylinder set $[\rho]$ is disjoint from $[\rho']$.
            \end{enumerate}
        \end{lma}
        This lemma is trivial if we assume that $\lshift:\shiftsp\rightarrow\shiftsp$ is topologically mixing, for we can take all the admissible words in $\infixset$ to have the same length. Since we assumed that $\lshift:\shiftsp\rightarrow\shiftsp$ is merely topologically transitive, we need Lemma \ref{lma:infixset}. Henceforth, we shall fix a set $\infixset$ satisfying the conditions stated above.

        Lemma \ref{lma:gjk} below plays a role similar to Lemma 3.8 in \cite{GJK}. The proof we will give in Appendix \ref{app:key} is more elementary than the one in \cite{GJK}.
        \begin{lma}\label{lma:gjk}
            Suppose that $\steplenlo{\steplenpot}{v}<\alpha<\steplenup{\steplenpot}{v}$. Then, given any two constants $K>2\distconst_{\steplenpot_\alpha}+\|\infixset\|\cdot\|\steplenpot_\alpha\|$ and $K'>0$, there exists a finite family $\postfixset$ of admissible words such that for any $\omega\in\windowword_{\alpha,K'}$, there is some $\tau\in\postfixset$ for which $\omega\tau\in\windowword_{\alpha,K}$.
        \end{lma}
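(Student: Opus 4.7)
The plan is to assemble $\postfixset$ explicitly as a finite family of admissible correction words whose Birkhoff sums collectively approximate every real number in the relevant window $[-K',K']$ to within the slack $K-2\distconst_\steplenpot-\|\infixset\|\cdot\|\steplenpot\|>0$ afforded by the hypothesis on $K$. Given such a family, for any $\omega\in\windowword_{K'}$ with $v\defeq S_\omega\steplenpot$, one selects $\tau\in\postfixset$ with $S_\tau\steplenpot$ close to $-v$, and combining the distortion bounds on $S_\omega\steplenpot$ and $S_\tau\steplenpot$ with the a priori estimate $|S_\rho\steplenpot|\leq\|\infixset\|\cdot\|\steplenpot\|$ on any connector $\rho\in\infixset$ appearing inside $\tau$ yields $|S_{|\omega\tau|}\steplenpot(\xi)|\leq K$ uniformly over $\xi\in[\omega\tau]$.

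To produce correction words whose Birkhoff sums cover $[-K',K']$, I invoke the hypothesis $\steplenlo<0<\steplenup$, which supplies sequences $\xi^+,\xi^-\in\shiftsp$ with $\limsup_n S_n\steplenpot(\xi^+)=+\infty$ and $\liminf_n S_n\steplenpot(\xi^-)=-\infty$. Since $|S_{n+1}\steplenpot(\xi)-S_n\steplenpot(\xi)|\leq\|\steplenpot\|$, an elementary intermediate-value argument shows that for every real target $t$ there is a prefix of $\xi^+$ (if $t>0$) or $\xi^-$ (if $t<0$) whose Birkhoff sum lies within $\|\steplenpot\|+\distconst_\steplenpot$ of $t$. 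Discretizing $t$ along a sufficiently fine finite grid $\{t_j\}_{j=1}^N$ in $[-K',K']$ then yields admissible words $\sigma_1,\dots,\sigma_N$ whose Birkhoff sums $\{S_{\sigma_j}\steplenpot\}$ cover the target window.

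To ensure admissibility of $\omega\tau$ regardless of the last symbol $a$ of $\omega$, I attach to each $\sigma_j$ a connector $\rho_{a,j}\in\infixset$ so that $a\rho_{a,j}\sigma_j$ is admissible; such a $\rho_{a,j}$ exists by the defining property of $\infixset$ for a topologically mixing SFT. Setting $\tau_{a,j}\defeq\rho_{a,j}\sigma_j$ and $\postfixset\defeq\{\tau_{a,j}:a\in\symbset,\ 1\leq j\leq N\}$, the family is finite. Given $\omega\in\windowword_{K'}$ ending in $a$, I choose the index $j$ whose $t_j$ best approximates $-v$, and verify $\omega\tau_{a,j}\in\windowword_K$ by applying the distortion bound to $S_\omega\steplenpot$ and $S_{\sigma_j}\steplenpot$ separately and the trivial estimate $|S_{\rho_{a,j}}\steplenpot|\leq\|\infixset\|\cdot\|\steplenpot\|$ to the connector.

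The hard part will be the grid-density calibration. The allowable band for $v+S_{\sigma_j}\steplenpot$ has length $2K-2\distconst_\steplenpot-2\|\infixset\|\cdot\|\steplenpot\|$, strictly positive by the hypothesis but potentially small, while the prefix-based grid of $\{S_{\sigma_j}\steplenpot\}$ has step size at most $\|\steplenpot\|+\distconst_\steplenpot$. A careful but elementary bookkeeping, possibly refining the grid by concatenating short admissible words with small positive and negative Birkhoff sums obtainable from $\xi^\pm$, is required to fit the grid-step within twice the band length. This is precisely where the hypothesis on $K$ is tight and determines the threshold. Once the grid is verified dense enough, the closing verification is just a direct application of the distortion inequalities to the three blocks of $\omega\tau_{a,j}$.
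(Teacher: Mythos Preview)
Your overall construction is the same as the paper's: take $\xi^\pm\in\shiftsp$ whose Birkhoff sums run off to $\pm\infty$, and build $\postfixset$ from connectors $\rho\in\infixset$ followed by prefixes of $\xi^\pm$ up to some fixed length. The intermediate ``grid'' $\{t_j\}$ is an unnecessary layer---the set of all such prefixes is already finite---but that is only cosmetic.

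The genuine gap is in your error accounting, and it is exactly the place you flag as ``the hard part.'' By selecting $\sigma_j$ so that $S_{\sigma_j}\steplenpot$ approximates $-v$ with $v=S_\omega\steplenpot$, you are forced to apply bounded distortion \emph{twice}: once to compare $S_{|\omega|}\steplenpot(\zeta)$ with $v$, and once to compare $S_{|\sigma_j|}\steplenpot(\lshift^{|\omega\rho|}\zeta)$ with $S_{\sigma_j}\steplenpot$. Together with the connector cost $\|\infixset\|\cdot\|\steplenpot\|$ and the unavoidable step-size $\|\steplenpot\|$ from the intermediate-value argument, the best bound you can reach is of order $2\distconst_\steplenpot+\|\infixset\|\cdot\|\steplenpot\|+\|\steplenpot\|$, which overshoots the stated threshold. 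Your suggested remedy of ``refining the grid by concatenating short words'' cannot close this gap: the achievable Birkhoff sums have no reason to be dense with mesh below $\|\steplenpot\|$ (think of a locally constant integer-valued $\steplenpot$), and every additional concatenation costs another connector.

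The paper's fix is not to decouple at all. Instead of targeting $-v$, it fixes the single sequence $\omega\rho\xi^-$ (with $\rho\in\infixset$ chosen so that this is admissible) and applies the first-crossing argument directly to $n\mapsto S_{|\omega\rho|+n}\steplenpot(\omega\rho\xi^-)$: take the smallest $k^-$ with $S_{|\omega\rho|+k^-}\steplenpot(\omega\rho\xi^-)<-\distconst_\steplenpot$. At that moment the value lies in $[-\distconst_\steplenpot-\|\steplenpot\|,\,-\distconst_\steplenpot)$, and passing from this one reference point to an arbitrary $\zeta\in[\omega\rho\xi^-_1\cdots\xi^-_{k^-}]$ costs only a \emph{single} $\distconst_\steplenpot$, yielding
\[
\sup_{\zeta}|S_{|\omega\rho|+k^-}\steplenpot(\zeta)|\leq 2\distconst_\steplenpot+\|\steplenpot\|\leq 2\distconst_\steplenpot+\|\infixset\|\cdot\|\steplenpot\|<K.
\]
The whole point is that by tracking the Birkhoff sum along one concrete sequence all the way through and only invoking distortion once at the very end, you save exactly the extra $\distconst_\steplenpot+\|\steplenpot\|$ that your three-block estimate loses. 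If you rewrite your selection rule to choose the prefix length based on $S_{|\omega\rho|+k}\steplenpot(\omega\rho\xi^\pm)$ rather than on $S_\omega\steplenpot$, your argument becomes the paper's.
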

        
        We also need the following known result on the recurrence of a random walk on $\bbR$.
        \begin{lma}[{\cite{atkinson,GJK}}]\label{lma:atkinson}
            Given any ergodic $\lshift$-invariant Borel probability measure $\mu$ on $\shiftsp$ and any $\mu$-integrable function $f:\shiftsp\rightarrow\bbR$, we have that $\int_{\shiftsp}f\,\rmd\mu=0$ if and only if $\liminf\limits_{n\rightarrow+\infty}|S_nf|=0$ $\mu$-a.e.
        \end{lma}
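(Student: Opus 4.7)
The plan is to split the biconditional into its two directions. The easy direction, from $\liminf_n |S_n f| = 0$ $\mu$-a.e.\ to $\int f \, \rmd \mu = 0$, follows immediately from Birkhoff's ergodic theorem: if $\int f \, \rmd \mu = c \neq 0$, then the ergodicity of $\mu$ forces $S_n f(\xi)/n \to c$ for $\mu$-a.e.\ $\xi$, so $|S_n f(\xi)| \to +\infty$, contradicting $\liminf_n |S_n f(\xi)| = 0$ on a set of full measure.

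The substantive direction is the converse. Assuming $\int f \, \rmd \mu = 0$, I would argue by contradiction: suppose that for some $\epsilon > 0$ the set $\{\xi : \liminf_n |S_n f(\xi)| \geq 4\epsilon\}$ has positive $\mu$-measure. By a routine exhaustion one can then produce an integer $N$ and a measurable set $E \subseteq \shiftsp$ with $\mu(E) > 0$ such that $|S_n f(\xi)| \geq 3\epsilon$ holds for every $\xi \in E$ and every $n \geq N$. The strategy is to reformulate this in terms of the skew-product transformation $T \colon \shiftsp \times \bbR \to \shiftsp \times \bbR$ defined by $T(\xi, t) \defeq (\lshift \xi, t + f(\xi))$, which preserves the (infinite) product measure $\mu \otimes \lambda$, with $\lambda$ Lebesgue measure on $\bbR$. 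In this formulation, the assumption says that for each $\xi \in E$, the forward orbit $T^n(\xi, 0) = (\lshift^n \xi, S_n f(\xi))$ eventually avoids the horizontal strip $\shiftsp \times (-3\epsilon, 3\epsilon)$.

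The main obstacle is to establish that $T$ is \emph{conservative} with respect to $\mu \otimes \lambda$ whenever $\int f \, \rmd \mu = 0$. Once this is known, the Poincar\'e recurrence theorem for conservative measure-preserving transformations gives that $(\mu \otimes \lambda)$-almost every orbit returns infinitely often to every open neighborhood of the initial point that has positive measure; applied to the positive-measure cylinder $E \times (-\epsilon, \epsilon)$, this yields an immediate contradiction with the strip-avoidance derived above. The proof of conservativity, which is the essential content of Atkinson's original argument, proceeds by supposing the existence of a wandering set $W$ of positive measure and decomposing $W$ according to the asymptotic sign of the cocycle $S_n f$: on each such piece Birkhoff's ergodic theorem would force a definite, non-zero ergodic average of $f$, in contradiction with the standing hypothesis $\int f \, \rmd \mu = 0$.
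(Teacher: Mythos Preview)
The paper does not give a proof of this lemma; it is stated with a citation to Atkinson and to \cite{GJK} and then used as a black box in the proof of Theorem~\ref{thm:main:freq}. So there is no in-paper argument to compare against, and your proposal has to be assessed on its own merits.

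Your easy direction is correct, and the overall architecture for the hard direction---pass to the skew product $T(\xi,t)=(\lshift\xi,\,t+f(\xi))$ on $(\shiftsp\times\bbR,\ \mu\otimes\lambda)$, establish conservativity, and then use Poincar\'e recurrence on $E\times(-\epsilon,\epsilon)$ to contradict the strip-avoidance---is the standard route and is set up correctly.

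The gap is in your justification of conservativity. You claim that from a wandering set $W$ of positive measure one can decompose by the asymptotic sign of $S_nf$ and that ``Birkhoff's ergodic theorem would force a definite, non-zero ergodic average of $f$'' on each piece. This inference fails: $S_nf(\xi)\to+\infty$ is entirely compatible with $S_nf(\xi)/n\to 0$ (sublinear escape), and in fact ergodicity of $\mu$ already gives $S_nf/n\to\int_\shiftsp f\,\rmd\mu=0$ for $\mu$-a.e.\ $\xi$ regardless of which piece of $W$ the point lies over. So the sign decomposition produces no contradiction. The genuine argument runs differently: from a wandering product set $A\times I$ with $\mu(A)>0$ and $\lambda(I)=\delta$ one deduces that whenever $\xi\in A$ and $\lshift^n\xi\in A$ then $|S_nf(\xi)|\ge\delta$; hence along the successive visit times $0=n_0<n_1<n_2<\cdots$ of the orbit of $\xi$ to $A$ the real numbers $S_{n_i}f(\xi)$ are pairwise $\delta$-separated. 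Since there are roughly $\mu(A)\,N$ such visit times below $N$, this forces $\max_{m\le N}|S_mf(\xi)|\ge c\,\mu(A)\,\delta\,N$ for large $N$, contradicting $\max_{m\le N}|S_mf(\xi)|=o(N)$, which is a consequence of $S_nf/n\to 0$. Without this counting step (or an equivalent one), your conservativity claim is unsupported and the hard direction is incomplete.
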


        Now we give the proof of Theorem \ref{thm:main:freq}, which is greatly inspired by \cite{GJK}.
        \begin{proof}[Proof of Theorem \ref{thm:main:freq}]
            When $\alpha\notin[\steplenlo{\steplenpot}{v},\steplenup{\steplenpot}{v}]$, we have $\unifrecset_\steplenpot\cap\freqappearset_\wordset=\recset_\steplenpot=\varnothing$, so our claim holds trivially. Thus, we only need to handle the case where $\steplenlo{\steplenpot}{v}<\alpha<\steplenup{\steplenpot}{v}$. Assume that $\steplenlo{\steplenpot}{v}<\alpha <\steplenup{\steplenpot}{v}$ for the rest of the proof. In addition, without loss of generality, we assume that $\udim(\shiftsp)>0$.
            
            As the equilibrium state $\gibbsmeas_\alpha$ is ergodic and $\int_{\shiftsp}\steplenpot_\alpha\,\rmd\gibbsmeas_\alpha=0$, by Lemma \ref{lma:atkinson}, we have $\liminf\limits_{n\rightarrow+\infty}\left|S_n\steplenpot_\alpha\right|=0$, $\gibbsmeas_\alpha$-a.e. Hence, by Borel-Cantelli lemma, for any $K>2\distconst_{\steplenpot_\alpha}+\|\infixset\|\cdot\|\steplenpot\|\geq\distconst_{\steplenpot_\alpha}$,
            \begin{equation*}
                \sum_{\omega\in \windowword_{\alpha,K}} \gibbsmeas_\alpha ([\omega])=+\infty
            \end{equation*}
            Combining the divergence of this series with the Gibbs property of $\gibbsmeas_\alpha$, we have
            \begin{equation*}
                \sum_{\omega\in \windowword_{\alpha,K}}\exp(S_\omega (q_\alpha\steplenpot_\alpha-\fenerg_\alpha(q_\alpha)\geopot))=+\infty.
            \end{equation*}
            This in turn gives
            \begin{equation*}
                \sum_{\omega \in\windowword_{\alpha,K}}\exp(-\fenerg_\alpha(q_\alpha)S_\omega\geopot)\geq\exp(-Kq_\alpha)\sum_{\omega\in\windowword_{\alpha, K}}\exp(S_\omega (q_\alpha\steplenpot_\alpha-\fenerg_\alpha(q_\alpha)\geopot))=+\infty,
            \end{equation*}
            where $q_\alpha$ is, as in Theorem \ref{thm:pw}, the unique real number satisfying that $\fenerg_\alpha(q_\alpha)=\udim(\levelset^\alpha_{\steplenpot,v})$. For any positive $s<\fenerg_\alpha(q_\alpha)=\udim(\levelset^\alpha_{\steplenpot,v})$, pick an integer $m\geq 1$ such that
            \begin{equation} \label{eqn:poinseries}
                \frac{1}{s} \log\sum_{\omega\in\windowword^m_{\alpha,K}}\exp(-sS_\omega\geopot)>\left(2\|\infixset\|+\|\postfixset\|+|\tilde{\omega}|\right)\|\geopot\|.
            \end{equation}
            To ease the notation, we shall write $C_0$ to denote $\left(2\|\infixset\|+\|\postfixset\|+|\tilde{\omega}|\right)\|\geopot\|$.
            
            Let $\tilde{\omega}$ be an admissible word in which all words from $\wordset$ appear at least once. Let $\postfixset$ be a finite set of admissible words satisfying the condition in Lemma \ref{lma:gjk}, with $K'$ taken to be $K+\left(2\|\infixset\|+|\tilde{\omega}|\right)\|\steplenpot\|$.
            
            For any admissible word $\omega$, $\freqappearset_\omega$ will denote $\freqappearset_{\set{\omega}}$. We claim that $\udim(\uniflevset^\alpha_{\steplenpot,v}\cap\freqappearset_{\tilde{\omega}})\geq s$. To show this claim, we construct a measure $\mu_s$ for which $\mu_s(\uniflevset^\alpha_{\steplenpot,v}\cap\freqappearset_{\tilde{\omega}})=1$, and then apply the mass distribution principle to this $\mu_s$. Define by induction a sequence $(\newsymbset_n)_{n\geq1}$ of subsets of $\windowword_{\alpha,K}$ as follows. Let $\newsymbset_1\defeq\windowword^m_{\alpha,K}$. For $k\geq 2$, fix $\omega^{(k-1)}\in\newsymbset_{k-1}$ and $\omega\in\newsymbset_1$, and choose $\rho,\lambda\in\infixset$ satisfying that $\omega^{(k-1)}\rho\omega\lambda\tilde{\omega}$ is admissible. Then, since $\newsymbset_{k-1}$ and $\newsymbset_1$ are, by the induction hypothesis, both subsets of $\windowword_{\alpha,K}$, we have
            \begin{equation*}
                \sup_{\xi\in [\omega^{(k-1)}\rho\omega\lambda\tilde{\omega}]}\left|S_{|\omega^{(k-1)}\rho\omega\lambda\tilde{\omega}|}\steplenpot_\alpha(\xi)\right|\leq K'.
            \end{equation*}
            Therefore, by Lemma \ref{lma:gjk}, there exists some $\tau\in\postfixset$ such that $\omega^{(k-1)}\rho\omega\lambda\tilde{\omega}\tau\in\windowword_{\alpha,K}$. Note that the word $\omega^{(k-1)}\rho\omega\lambda\tilde{\omega}\tau$ is constructed from $\omega^{(k-1)}\in\newsymbset_{k-1}$ and $\omega\in\newsymbset_1$, so we may denote it by $\theta_k(\omega^{(k-1)},\omega)$. For every $\omega^{(k-1)}\in\newsymbset_{k-1}$, define $\newsymbset_k(\omega)\defeq\set{\theta_k(\omega,\omega')|\omega'\in\newsymbset_1}$ and $\newsymbset_k\defeq\bigcup_{\omega\in\newsymbset_{k-1}}\newsymbset_k(\omega)$. It is clear from our discussion above that $\newsymbset_k\subseteq\windowword_{\alpha,K}$.
            
            Now we are ready to construct the mass distribution $\mu_s$ from the set sequence $(\newsymbset_n)_{n\geq1}$. Set
            \begin{equation*}
                \mu_s([\omega^{(1)}]) \defeq\frac{\exp(-sS_{\omega^{(1)}}\geopot)}{\sum_{\omega\in\newsymbset_1}\exp(-sS_{\omega}\geopot)}
            \end{equation*}
            for any $\omega^{(1)}\in\newsymbset_1$ and
            \begin{equation*}
                \mu_s([\omega^{(k)}]) \defeq\frac{\exp(-sS_{\omega^{(k)}}\geopot)\mu_s([\omega^{(k-1)}])}{\sum_{\omega\in\newsymbset_k(\omega^{(k-1)})}\exp(-sS_{\omega}\geopot)}
            \end{equation*}
            for $\omega^{(k)}\in\newsymbset_k(\omega)$, where $k\geq 2$ and $\omega^{(k-1)}\in\newsymbset_{k-1}$. Clearly, for every $\omega^{(k-1)}\in\newsymbset_{k-1}$ and every $\omega^{(k)}\in\newsymbset_{k}(\omega)$, $\omega^{(k)}$ is a continuation of $\omega^{(k-1)}$, or equivalently, we can say $[\omega^{(k)}]\subseteq[\omega^{(k-1)}]$. Moreover, for any $\omega^{(k-1)}\in\newsymbset_{k-1}$, and any two distinct $\omega,\omega'\in\newsymbset_1$, we claim that
            \begin{equation} \label{eqn:thm:main:freq:proof}
                [\theta_k(\omega^{(k-1)},\omega)]\cap[\theta_k(\omega^{(k-1)},\omega')]=\varnothing.
            \end{equation}
            Once we manage to prove (\ref{eqn:thm:main:freq:proof}), we can immediately deduce the existence and uniqueness of the measure $\mu_s$ from Kolmogorov consistency theorem. The proof of (\ref{eqn:thm:main:freq:proof}) is given as follows. As before, we write $\theta_k(\omega^{(k-1)},\omega)=\omega^{(k-1)}\rho\omega\lambda\tilde{\omega}\tau$. For $\omega'$, we write $\theta_k(\omega^{(k-1)},\omega')=\omega^{(k-1)}\rho'\omega'\lambda'\tilde{\omega}\tau'$ in the same way as we wrote $\theta_k(\omega^{k-1},\omega)$. This means that $\rho$ and $\rho'$ are both words from $\infixset$. If $\rho\neq\rho'$, then we have
            \begin{equation*}
                [\theta_k(\omega^{(k-1)},\omega)]\cap[\theta_k(\omega^{(k-1)},\omega')]\subseteq[\omega^{(k-1)}\rho]\cap[\omega^{k-1}\rho']=\varnothing,
            \end{equation*}
            because the set $\infixset$ we took fulfills the conditions in Lemma \ref{lma:infixset}. If $\rho=\rho'$, then we have
            \begin{equation*}
                [\theta_k(\omega^{(k-1)},\omega)]\cap[\theta_k(\omega^{(k-1)},\omega')]\subseteq[\omega^{(k-1)}\rho\omega]\cap[\omega^{k-1}\rho'\omega']=\varnothing,
            \end{equation*}
            because $\omega$ and $\omega'$ are distinct words with the same length $m$, which in particular implies that $\omega$ is not an initial block of $\omega'$ and vice versa. Therefore, in any of the two possible cases above, (\ref{eqn:thm:main:freq:proof}) always holds.
            
            By construction, $\mu_s$ is supported on $\bigcap_{k=1}^\infty\bigcup_{\omega\in\newsymbset_k}[\omega]$. Also note that for any $\xi\in\bigcap_{k=1}^\infty\bigcup_{\omega\in\newsymbset_k}[\omega]$,
            \begin{equation*}
                \sup_{n\geq 1}|S_n\steplenpot_\alpha(\xi)|\leq K'+\|\postfixset\|\cdot\|\steplenpot_\alpha\|.
            \end{equation*}
            Moreover, all subwords of $\xi$ with length no less than $2(m+\|\infixset\|+|\tilde{\omega}|)+\|\postfixset\|$ contain $\tilde{\omega}$ as a subword, thus containing all the words from $\wordset$ as subwords. Therefore, we have $\mu_s(\uniflevset^\alpha_{\steplenpot,v}\cap\freqappearset_\wordset)=1$.
            
            Observe that for any integer $k\geq2$ and $\omega\in\newsymbset_{k-1}$,
            \begin{align*}
                \sum_{\omega'\in \newsymbset_k(\omega)}\exp(-sS_{\omega'}\geopot)&\geq\exp(-sS_\omega\geopot-sC_0)\sum_{\omega''\in\newsymbset_1=\windowword^m_K}\exp(-sS_{\omega''}\geopot)\\
                &\geq\exp(-sS_\omega\geopot).
            \end{align*}
            As a result, for any integer $k\geq 2$, 
            \begin{align*}
                \max_{\omega\in \newsymbset_k}\frac{\mu_s([\omega])}{\exp(-sS_\omega\geopot)}&=\max_{\omega'\in\newsymbset_{k-1}}\frac{\mu_s([\omega'])}{\sum_{\omega''\in\newsymbset_k(\omega')}\exp(-sS_{\omega''}\geopot)}\leq\max_{\omega'\in\newsymbset_{k-1}}\frac{\mu_s([\omega'])}{\exp(-sS_{\omega'}\geopot)},
            \end{align*}
            thus implying that
            \begin{equation*}
                \max_{\omega\in \newsymbset_k}\frac{\mu_s([\omega])}{\exp(-sS_\omega\geopot)}\leq\max_{\omega\in\newsymbset_1}\frac{\mu_s([\omega])}{\exp(-sS_{\omega}\geopot)}.
            \end{equation*}
            By the mass distribution principle, $\udim(\uniflevset^\alpha_{\steplenpot,v}\cap\freqappearset_\wordset)\geq s$. Since $s$ is an arbitrary positive number smaller than $\udim(\levelset^\alpha_{\steplenpot,v})$, we conclude that $\udim(\uniflevset^\alpha_{\steplenpot,v}\cap\freqappearset_\wordset)\geq\udim(\levelset^\alpha_{\steplenpot,v})$. Since $\uniflevset^\alpha_{\steplenpot,v}\cap\freqappearset_\wordset\subseteq\levelset^\alpha_{\steplenpot,v}$, the proof is complete.
        \end{proof}
    \subsection{Proof of Theorem \ref{thm:main:notfreq}}
        We employ Theorem \ref{thm:main:freq} to prove Theorem \ref{thm:main:notfreq}.
        \begin{proof}[Proof of Theorem \ref{thm:main:notfreq}]
            We construct an admissible word $\omega^*$ as follows. Let $\wordset^\infty\defeq\set{\omega\omega\cdots|\omega\in\wordset}\subseteq\shiftsp$. Note that $\bigcup_{n=0}^\infty\lshift^n\wordset^\infty$ is finite. Thus, we can take one admissible word $\omega^*$ whose cylinder set $[\omega^*]$ is disjoint from $\bigcup_{n=0}^\infty\lshift^n\wordset^\infty$. For this $\omega^*$, we claim that
            \begin{equation} \label{eqn:thm:main:notfreq:proof}
                \freqappearset_{ \omega^*} \defeq\freqappearset_{\set{\omega^*}}\subseteq\notfreq_\wordset.
            \end{equation}
            To see this, take an arbitrary $\xi\in\freqappearset_{\omega^*}$. Then, there exists some $l\in\bbN$ such that every subword of $\xi$ with length no less than $l$ has $\omega^*$ appearing therein. For any $\omega\in\wordset$, note that the word $\omega^l$ does not have $\omega^*$ as a subword, and has its length being $l\cdot|\omega|\geq l$. Hence, $\omega^l$ cannot appear in $\xi$, so $\xi\in\notfreq_\wordset$. This proves (\ref{eqn:thm:main:notfreq:proof}).
            
            Combining (\ref{eqn:thm:main:notfreq:proof}) with Theorem \ref{thm:main:freq}, we immediately have
            \begin{equation*}
                \udim( \uniflevset^\alpha_{\steplenpot,v}\cap\notfreq_\wordset)\geq\udim(\uniflevset^\alpha_{\steplenpot,v}\cap\freqappearset_{\omega^*})=\udim(\levelset^\alpha_{\steplenpot,v}),
            \end{equation*}
            for any $\alpha\notin\set{\steplenlo{\steplenpot}{v},\steplenup{\steplenpot}{v}}$. Since $\uniflevset^\alpha_{\steplenpot,v}\cap\notfreq_\wordset\subseteq \levelset^\alpha_{\steplenpot,v}$, the proof is complete.
        \end{proof}
    \subsection{Proof of Corollary \ref{cor:freqnotfreqmaxdim}} \label{subsec:cor:freqnotfreqmaxdim}
        Using Theorem \ref{thm:main:freq} and Theorem \ref{thm:main:notfreq}, we can prove Corollary \ref{cor:freqnotfreqmaxdim} as follows.
        \begin{proof}[Proof of Corollary \ref{cor:freqnotfreqmaxdim}]
            Take \holder continuous $\steplenpot:\shiftsp\rightarrow\bbR$ and $v:\shiftsp\rightarrow(-\infty,0)$ such that $\steplenlo{\steplenpot}{v}<\steplenup{\steplenpot}{v}$. Then, there is a unique $\alpha\in (\steplenlo{\steplenpot}{v},\steplenup{\steplenpot}{v})$ such that $\udim(\levelset^\alpha_{\steplenpot,v})=\udim(\shiftsp)$ \cite{pesinweiss}. For this $\alpha$, by Theorem \ref{thm:main:freq} and Theorem \ref{thm:main:notfreq}, we have that for any finite set $\wordset$ of admissible words,
            \begin{align*}
                \udim(\shiftsp)= \udim(\levelset^\alpha_{\steplenpot,v})=\udim(\uniflevset^\alpha_{\steplenpot,v}\cap\freqappearset_\wordset)&\leq\udim(\freqappearset_\wordset)\leq\udim(\shiftsp);\\
                \udim(\shiftsp)= \udim(\levelset^\alpha_{\steplenpot,v})=\udim(\uniflevset^\alpha_{\steplenpot,v}\cap\notfreq_\wordset)&\leq\udim(\notfreq_\wordset)\leq\udim(\shiftsp).
            \end{align*}
            This completes the proof.
        \end{proof}
    \subsection{Remarks on Boundary Case}
        In this section, we focus on the boundary case, in which $\alpha\in\set{\steplenlo{\steplenpot}{v},\steplenup{\steplenpot}{v}}$. We shall first show that Theorem \ref{thm:main:freq} in general fails for the boundary case. This will be achieved by showing Proposition \ref{prop:counterex} below; see Remark \ref{rmk:counterex}. After that, we shall show that $\udim(\uniflevset^\alpha_{ \steplenpot,v})=\udim(\levelset^\alpha_{\steplenpot,v})$ holds in general, even for the boundary case. This means that we will prove Proposition~\ref{prop:main:rec}.
        \begin{prop} \label{prop:counterex}
            Let $\lshift:\shiftsp\rightarrow\shiftsp$ be a topologically transitive SFT. Let the functions $\steplenpot:\shiftsp\rightarrow\bbR$ and $v:\shiftsp\rightarrow(-\infty,0)$ be \holder continuous functions satisfying that $\steplenlo{\steplenpot}{v}<\steplenup{\steplenpot}{v}$. Then, for any $\alpha\in\set{\steplenlo{\steplenpot}{v},\steplenup{\steplenpot}{v}}$, there is a set $\wordset$ containing only one admissible word satisfying $\uniflevset^\alpha_{\steplenpot,v}\cap\freqappearset_\wordset=\varnothing$.
        \end{prop}
        \begin{rmk} \label{rmk:counterex}
            Pick $\steplenpot$ such that $\steplenlo{\steplenpot}{v}<\steplenup{\steplenpot}{v}$, $\alpha\in\set{\steplenlo{\steplenpot}{v},\steplenup{\steplenpot}{v}}$ and $\udim(\levelset^\alpha_{\steplenpot,v})>0$. Such a choice of $\varphi$ is possible \cite{schmeling}. Then, we immediately get a counterexample for the claim of Theorem \ref{thm:main:freq} for the boundary case.
        \end{rmk}
        \begin{proof}[Proof of Proposition \ref{prop:counterex}]
            We only prove the proposition for the case where $\alpha=\steplenlo{\steplenpot}{v}$. When $\alpha=\steplenup{\steplenpot}{v}$, one can use a similar argument to prove the same claim.
            
            Assume $\alpha=\steplenlo{\steplenpot}{v}$. Since $\steplenup{\steplenpot}{v}>\steplenlo{\steplenpot}{v}=\alpha$, we have $\inf_{n\geq1} S_n\steplenpot_\alpha(\xi^*)=-\infty$ for some $\xi^*\in\shiftsp$. Take a positive integer $n$ such that
            \begin{equation} \label{eqn:prop:counterex:proof1}
                S_n\steplenpot_\alpha (\xi^*)<-\distconst_{\steplenpot_\alpha}-\|\infixset\|\cdot\|\steplenpot_\alpha\|-C_--1,
            \end{equation}
            where $C_-\defeq\sup_{m\geq1}\sup_{\zeta\in\shiftsp}S_m\steplenpot_\alpha(\zeta)$ is finite due to Corollary \ref{cor:bd}. Set
            \begin{equation*}
                \omega\defeq \xi^*_1\cdots\xi^*_n\rho,
            \end{equation*}
            where $\rho$ is an admissible word in $\infixset$ that makes $\omega^2=\omega\omega$ admissible. It then follows from (\ref{eqn:prop:counterex:proof1}) that
            \begin{equation} \label{eqn:prop:counterex:proof2}
                \sup_{\zeta \in[\omega]}S_{|\omega|}\steplenpot_\alpha(\zeta)\leq S_{|\omega|}\steplenpot_\alpha(\xi^*)+\distconst_{\steplenpot_\alpha}<-C_--1.
            \end{equation}
            Given any $\xi\in\freqappearset_\omega$, we shall show that $\xi\notin\uniflevset^\alpha_{\steplenpot,v}$, which will complete the proof.
            
            Since $\xi\in\freqappearset_\omega$, we can find a strictly increasing sequence $(n_k)_{k\geq1}$ of positive integers such that $\xi\in\bigcap_{k=1}^\infty\lshift^{-n_k}[\omega]$ and $n_j-n_{j-1}>|\omega|$ for each positive integer $j$, where we set $n_0\defeq 0$. Then, for any integer $k\geq 2$, using (\ref{eqn:prop:counterex:proof2}) and the definition of $C_-$, we have
            \begin{align*}
                S_{n_k} \steplenpot_\alpha(\xi)&\leq \sum_{j=1}^kS_{n_j-n_{j-1}}\steplenpot_\alpha(\lshift^{n_{j-1}}(\xi))\\
                &\leq C_-+\sum_{j=2}^kS_{|\omega|}\steplenpot_\alpha(\lshift^{n_{j-1}}(\xi))+S_{n_j-n_{j-1}-|\omega|}\steplenpot_\alpha(\lshift^{n_{j-1}+|\omega|}(\xi))\\
                &< C_-+\sum_{j=2}^k \left(-C_--1+C_-\right)=C_--k+1.
            \end{align*}
            We can thus conclude that $\xi\notin\uniflevset^\alpha_{\steplenpot,v}$.
        \end{proof}
        \begin{proof}[Proof of Proposition \ref{prop:main:rec}]
            Without loss of generality, we may assume that $\steplenlo{\steplenpot}{v}\leq\alpha\leq\steplenup{\steplenpot}{v}$, for otherwise $\uniflevset^\alpha_{\steplenpot,v}$ and $\levelset^\alpha_{\steplenpot,v}$ would both be empty. When $\steplenlo{\steplenpot}{v}=\steplenup{\steplenpot}{v}=\alpha$, by Corollary \ref{cor:bd}, we have $\uniflevset^\alpha_{\steplenpot,v}=\levelset^\alpha_{\steplenpot,v}=\shiftsp$, so the claim holds trivially. Hence, we shall henceforth assume that  $\steplenlo{\steplenpot}{v}<\steplenup{\steplenpot}{v}$. If $\steplenlo{\steplenpot}{v}<\alpha <\steplenup{\steplenpot}{v}$, note that $\uniflevset^\alpha_{\steplenpot,v}\cap\freqappearset_\wordset\subseteq\uniflevset^\alpha_{\steplenpot,v}\subseteq\levelset^\alpha_{\steplenpot,v}$, where $\wordset$ is an arbitrary finite set of admissible words, so our claim is an immediate consequence of Theorem \ref{thm:main:freq}. Hence, it remains to handle the case where $\steplenlo{\steplenpot}{v}<\steplenup{\steplenpot}{v}$ and $\alpha$ is equal to either $\steplenlo{\steplenpot}{v}$ or $\steplenup{\steplenpot}{v}$.
            
            Suppose that $\alpha=\steplenlo{\steplenpot}{v}$. Take $\gibbsmeas_\alpha$ as in the case (\ref{item:pw:bdy}) of Theorem~\ref{thm:pw}. We claim that $\gibbsmeas_\alpha(\uniflevset^\alpha_{\steplenpot,v})=1$. To see this, take a sub-action $g_-:\shiftsp\rightarrow\bbR$ satisfying that $\steplenpot_\alpha+g_-\circ\lshift-g_-\leq0$; Theorem \ref{thm:subact} guarantees the existence of $g_-$. Note that by Theorem \ref{thm:pw}, we have $\gibbsmeas_\alpha(\levelset^\alpha_{\steplenpot,v})=1$, implying that
            \begin{equation*}
                \lim_{ n\rightarrow+\infty}\frac{1}{n}S_n\steplenpot_\alpha=0,\,\gibbsmeas_\alpha\text{-a.e.}
            \end{equation*}
            Hence, by Birkhoff's ergodic theorem, we have $\int_{\shiftsp} \steplenpot_\alpha\,\rmd\gibbsmeas_\alpha=0$. The $\lshift$-invariance of $\gibbsmeas_\alpha$ then implies that
            \begin{equation*}
                \int_{\shiftsp} S_n(\steplenpot_\alpha+g_-\circ\lshift-g_-)\,\rmd\gibbsmeas_\alpha=n\int_{\shiftsp}(\steplenpot_\alpha+g_-\circ\lshift-g_-)\,\rmd\gibbsmeas_\alpha=n \int_{\shiftsp} \steplenpot_\alpha\,\rmd\gibbsmeas_\alpha=0,
            \end{equation*}
            for every positive integer $n$. For any $n$, since $S_n(\steplenpot_\alpha+g_-\circ\lshift-g_-)\leq 0$, we have
            \begin{equation*}
                S_n\steplenpot_\alpha +g_-\circ\lshift^n-g_-=S_n(\steplenpot_\alpha+g_-\circ\lshift-g_-)=0,\,\gibbsmeas_\alpha\text{-a.e.}
            \end{equation*}
            It follows that $|S_n\steplenpot_\alpha|\leq 2\|g_-\|$, $\gibbsmeas_\alpha$-a.e. Therefore, we have $\gibbsmeas_\alpha(\uniflevset^\alpha_{\steplenpot,v})=1$. From $\udim(\gibbsmeas_\alpha)=\udim(\levelset^\alpha_{\steplenpot,v})$ in Theorem \ref{thm:pw}, we deduce that $\udim(\uniflevset^\alpha_{\steplenpot,v})=\udim(\levelset^\alpha_{\steplenpot,v})$ when $\alpha=\steplenlo{\steplenpot}{v}$. By symmetry, we also have the same result when $\alpha=\steplenup{\steplenpot}{v}$. The proof is thus complete.
        \end{proof}

\section{Application to Gibbs Measure on One-Dimensional Manifold}\label{sec:app}
    \subsection{Setup} \label{subsec:app:setup}
        Recall that $\onedimmani$ denotes either the circle $\bbS^1$ or the real line $\bbR$. We begin with the definition of a conformal graph directed Markov system (CGDMS) on $\onedimmani$. The definition of CGDMS on a more general space is given in \cite{gdms}.
        
        Let $(\vertexset,\edgeset)$ be a directed graph, where $\vertexset$ is the set of vertices and $\edgeset$ is the set of directed edges. It is assumed in \cite{gdms} that $\vertexset$ is finite, and $\edgeset$ is countable (possibly finite). In this paper, we shall always assume that $\edgeset$ is finite and contains at least two elements. Define $\initialv,\terminalv:\edgeset\rightarrow\vertexset$ as the mappings sending every directed edge $e\in\edgeset$ to its initial vertex and terminal vertex respectively. There is a natural incidence matrix $\incmat:\edgeset\times\edgeset\rightarrow\set{0,1}$ arising from the graph; let $\incmat(e,e')=1$ if and only if $\terminalv(e)=\initialv(e')$. Let $\lshift:\shiftsp\rightarrow\shiftsp$ denote the SFT for which the set of symbols is $\edgeset$ and the incidence matrix is $\incmat$. We assume that the directed graph $(\vertexset,\edgeset)$ is strongly connected, meaning that for any ordered pair $(p,p')$ of vertices, there exists a path starting from $p$ and ending at $p'$. It follows from this assumption that the SFT $\lshift:\shiftsp\rightarrow\shiftsp$ is topologically transitive.

        Let $\intervalfamily=\set{\interval_p|p\in\vertexset}$ be a family of compact intervals of positive lengths in $\onedimmani$, whose interiors are pairwise disjoint. Let $\GDMS\defeq\set{\contraction_e:\interval_{\terminalv(e)}\rightarrow\interval_{\initialv(e)}|e\in\edgeset}$ be a family of contractions. Following \cite{gdms}, we call $\GDMS$ a graph directed Markov system (GDMS). Since $\edgeset$ is finite, this means that there is some positive constant $c<1$ such that $d_{\onedimmani}(\contraction_e(x),\contraction_e(y))\leq c\cdot d_{\onedimmani}(x,y)$ for any $e\in\edgeset$ and any $x,y\in\interval_{\terminalv(e)}$.
        
        For each $e\in\edgeset$, we assume that $\contraction_e$ can be extended to a $C^1$ diffeomorphism in some open neighbourhood of $\interval_{\terminalv(e)}$, say $U_e$. The derivatives $\contraction'_e$ on $U_e$ is assumed to be \holder continuous. Finally, we assume the open set condition. For any $E\subseteq\onedimmani$, let $\interior(E)$ denote the interior of $E$. Then the open set condition means that
        \begin{equation*}
            \interior(\contraction_e (\interval_{\terminalv(e)}))\cap\interior(\contraction_{e'} (\interval_{\terminalv(e')}))=\varnothing,
        \end{equation*}
        for any two distinct $e,e'\in\edgeset$. Under all these assumptions, the GDMS $\GDMS$ satisfies all the axioms of a conformal graph directed Markov system (CGDMS); see \cite{gdms} for details.
        
        For any admissible word $\omega$ over $\edgeset$, define
        \begin{equation*}
            \contraction_\omega \defeq\contraction_{\omega_1}\circ\cdots\circ\contraction_{\omega_{|\omega|}}.
        \end{equation*}
        Denote $\omega_1$ and $\omega_{|\omega|}$ by $a$ and $b$. Then, clearly, $\contraction_\omega$ is a map defined on $\interval_{\terminalv(b)}$. Define $\interval_\omega\defeq\contraction_\omega(\interval_{\terminalv(b)})$. The coding map $\coding:\shiftsp\rightarrow\onedimmani$ is defined by setting $\coding(\xi)$ as the unique point in the singleton $\bigcap_{k=1}^\infty\interval_{\xi_1\cdots\xi_k}$. The limit set is then the set $\attractor\defeq\coding(\shiftsp)$.
        
        We define $\geopot:\shiftsp\rightarrow\bbR$ by $\geopot(\xi)\defeq -\log|\contraction'_{\xi_1}(\coding(\xi))|$ for every $\xi\in\shiftsp$. As the contractions $\contraction_e$ are extended to $C^1$ diffeomorphisms, $\contraction'_{\xi_1}(\coding(\xi))$ cannot be zero, so $\geopot$ takes finite values. By our assumptions above, we can readily see that $\geopot$ is \holder continuous and strictly positive. The \holder continuity of the derivatives of the contractions implies that
        \begin{equation*}
            \sup_{\omega\in \symbset^*_\incmat}\sup_{x,y\in\interval}\left|\frac{\contraction'_\omega(x)}{\contraction'_\omega(y)}\right|<+\infty.
        \end{equation*}
        We refer to \cite{gdms} for the proof of this well-known fact. As a consequence, from the observation that $S_n\geopot(\xi)=-\log|\contraction'_{\xi_1\cdots\xi_n}(\coding(\xi))|$ for each positive integer $n$, we have
        \begin{equation} \label{eqn:contr'dist}
            \sup_{n\in\bbN} \sup_{\xi\in\shiftsp}\sup_{x\in\interval_{\terminalv(\xi_n)}}\Big|-S_n\geopot(\xi)-\log\left|\contraction'_{\xi_1\cdots\xi_n}(x)\right|\Big|<+\infty.
        \end{equation}
        
        Suppose that we are given a Gibbs measure $\gibbsmeas$ on $\shiftsp$ of some \holder continuous function $\steplenpot:\shiftsp\rightarrow\bbR$, whose topological pressure $P(\steplenpot)$ equals zero. Let $C_\gibbsmeas\geq 1$ be a constant such that for any admissible word $\omega$ over $\symbset$,
        \begin{equation*}
            C_\gibbsmeas^{-1} \exp(S_\omega\steplenpot)\leq\gibbsmeas([\omega])\leq C_\gibbsmeas\exp(S_\omega\steplenpot).
        \end{equation*}
        The Gibbs measure $\gibbsmeas$ can be transferred to a measure supported on the limit set $\attractor$, namely the pushforward measure $\coding_*\gibbsmeas$ through the coding map $\coding$.
    \subsection{Multifractal Analysis of H\"older Regularity}
        We begin with the construction of a set $\wordset$ of admissible words for which (\ref{eqn:prop:der}) is satisfied.
        
        For every edge $e\in\edgeset$, we have seen that $\interval_e\subsetneq\onedimmani$, so we can take a total ordering $\leq_e$ for each $\interval_e$. For each $e\in\edgeset$, define $P_e\defeq\set{\min(\interval_e\cap\attractor),\max(\interval_e\cap\attractor)}$, where the minimum and maximum are taken with respect to $\leq_e$. Define $P\defeq\bigcup_{e\in\edgeset}P_e$. Also recall that for any $e\in\edgeset$ and any $x,y\in\interval_e$, $[x,y]_e$ denotes the unique subinterval of $\interval_e$ of which the two endpoints are $x$ and $y$.

        \begin{lma} \label{lma:app:wordset}
            The set $\coding^{-1}(P)$ is finite. Moreover, there exists a non-negative integer $N$ such that for every $\xi\in\coding^{-1}(P)$, $\lshift^N(\xi)$ is a periodic sequence.
        \end{lma}
        \begin{proof}
            Note that for any two distinct admissible words $\omega,\tau$ of the same length, the open set condition implies that $\interior(\interval_\omega)\cap\interior(\interval_\tau)=\varnothing$. It follows that, for any $x\in\attractor$, $\coding^{-1}\set{x}$ has no more than two elements. Therefore, we have
            \begin{equation*}
                \#\coding^{-1}(P)\leq 2\cdot\# P\leq 4\cdot\#\vertexset<+\infty.
            \end{equation*}
            
            To show the second claim, we need to prove the following claim:
            \begin{equation} \label{eqn:lma:app:wordset}
                \lshift( \coding^{-1}P)\subseteq \coding^{-1}(P).
            \end{equation}
            Take $\xi\in\coding^{-1}(P)$ arbitrarily. As $\contraction_{\xi_1}:\interval_{\xi_2}\rightarrow\interval_{\xi_1\xi_2}$ is a $C^1$ diffeomorphism, it either preserves or reverses the ordering. Note that $\contraction_{\xi_1}(\coding(\lshift(\xi)))= \coding(\xi)$ and $\coding(\xi)\in P\cap \interval_{\xi_1}=P_{\xi_1}$, so $\contraction_{\xi_1}(\coding(\lshift(\xi))$ is either the minimum or the maximum of $\interval_{\xi_1}\cap\attractor$ under the total ordering $\leq_{\xi_1}$. Therefore, by the monotonicity of $\contraction_{\xi_1}$ we have $\coding(\lshift(\xi))\in P_{\xi_2}\subseteq P$. Since $\xi$ is arbitrarily taken from $\coding^{-1}(P)$, we conclude that $\lshift(\coding^{-1}(P))\subseteq\coding^{-1}(P)$.

            Combining (\ref{eqn:lma:app:wordset}) with the fact that $\coding^{-1}(P)$ is a finite set, we see that for each $\xi\in\coding^{-1}(P)$, we can take a non-negative integer $N_\xi$ such that $\lshift^{N_\xi}(\xi)$ is periodic. Take an arbitrary $N\geq\max_{\xi\in\coding^{-1}(P)}$. This $N$ then satisfies the requirement in the second claim.
        \end{proof}

        Take a non-negative integer $N$ satisfying the condition in Lemma \ref{lma:app:wordset}. Then, for each $\xi\in\coding^{-1}(P)$, we are able to take one word $\omega(\xi)$ such that $\lshift^N(\xi)=\omega(\xi)\omega(\xi)\cdots$. Henceforth, we always set
        \begin{equation*}
            \wordset\defeq \set{\omega(\xi)|\xi\in\coding^{-1}(P)}.
        \end{equation*}
        Clealy, $\wordset$ is a finite set of admissible words.

        For this $\wordset$ and $v=-\geopot$, we can now prove the inclusion relation (\ref{eqn:prop:der}).
        \begin{prop} \label{prop:der}
           For any $\alpha\geq 0$, we have  $\uniflevset^\alpha_{ \steplenpot,v}\cap\notfreq_\wordset\subseteq\coding^{-1}\left(\moderate^\alpha_\gibbsmeas\right)\subseteq\uniflevset^\alpha_{\steplenpot,v}$.
        \end{prop}
        A similar assertion for a more restricted situation appears in \cite[Proposition 2.3]{kessestratholder}.
        \begin{proof}
            Fix $\alpha\geq0$. As in the previous section, define $\steplenpot_\alpha\defeq \steplenpot-\alpha v=\steplenpot+\alpha\geopot$.
            
            We begin with the first inclusion in the proposition. Without loss of generality, assume that $\uniflevset^\alpha_{\steplenpot,v}\cap\notfreq_\wordset$ is non-empty and take an arbitrary $\xi\in\uniflevset^\alpha_{\steplenpot,v}\cap\notfreq_\wordset$. Now that $\xi\in\notfreq_\wordset$, we have $\lshift^n\xi \notin \coding^{-1}(P)$ for any non-negative integer $n$. In particular, $\coding(\xi)$ must lie in the interior of every $\interval_{\xi_1\cdots\xi_n}$.
            
            Now fix an arbitrary $y\neq \coding(\xi)$ in $\interval_{\xi_1}$. Since the set sequence $(\interval_{\xi_1\cdots\xi_n})_{n\geq 1}$ is descending and $\bigcap_{n=1}^\infty\interval_{\xi_1\cdots\xi_n}=\set{\coding(\xi)}$, there exists a unique positive integer $m$ such that $y\in\interval_{\xi_1\cdots\xi_m}\setminus\interval_{\xi_1\cdots\xi_{m+1}}$.
            
            Note that $\interval_{\xi_1\cdots\xi_m}$ is connected, so on the one hand, we can readily see that
            \begin{equation} \label{eqn:prop:der:proof1}
                [\coding(\xi), y]_{\xi_1}\subseteq\interval_{\xi_1\cdots\xi_m}.
            \end{equation}
            On the other hand, take $l$ to be a positive integer such that $\xi\in\notfreq_{\wordset,l}$. Let $N$ be a non-negative integer such that for any $\xi'\in\coding^{-1}(P)$, $\lshift^N(\xi')$ is periodic. The existence of this $N$ is guaranteed by Lemma \ref{lma:app:wordset}. Define
            \begin{equation*}
                L\defeq (l+1)\|\wordset\|+N.
            \end{equation*}
            We claim that there is an admissible word $\tau$ over $\edgeset$ of length $L$ such that $\xi_{m+1}\tau$ is admissible and
            \begin{equation} \label{eqn:prop:der:proof2}
                \interval_{\xi_1 \cdots\xi_{m+1}\tau}\subseteq[\coding(\xi),y]_{\xi_1}.
            \end{equation}
            
            To prove this claim, first note that $\contraction_{\xi_1\cdots\xi_{m-1}}:\interval_{\xi_m}\rightarrow\interval_{\xi_1\cdots\xi_m}$ is invertible. Set $y'\defeq\contraction^{-1}_{\xi_1\cdots\xi_{m-1}}(y)$ and $\xi'\defeq\lshift^{m-1}(\xi)$. Define
            \begin{equation*}
                z\defeq \max(\attractor\cap\interval_{\xi_m\xi_{m+1}})
            \end{equation*}
            if $\coding(\xi')\leq_{\xi_m}y'$. Otherwise, define
            \begin{equation*}
                z\defeq \min(\attractor\cap\interval_{\xi_m\xi_{m+1}}).
            \end{equation*}
            Here, the maximum and minimum are both taken with respect to $\leq_{\xi_m}$. Take $\zeta\in[\xi_m\xi_{m+1}]$ satisfying $\coding(\zeta)=z$. By the monotonicity of $\contraction_{\xi_m}$, we see that $\coding(\lshift(\zeta))$ is in $P_{\xi_{m+1}}$. It follows that $\lshift^{N+1}(\zeta)$ is periodic, and furthermore, there exists an integer $k\in\set{1,\cdots,\|\wordset\|}$ and a word $\omega\in\wordset$ such that $\lshift^{N+1+k}(\zeta)=\omega\omega\cdots$. Take $\tau$ as the unique admissible word of length $L$ such that $\zeta\in[\xi_m\xi_{m+1}\tau]$. Then, on the one hand, we have that $\tau$ contains $\omega^l$ as a subword. On the other hand, since $\xi\in\notfreq_{\wordset,l}$, the word $\xi_{m+2}\cdots\xi_{m+L+1}$ does not contain $\omega^l$ as a subword. Therefore, $\xi_m\xi_{m+1}\tau$ and $\xi_m\cdots\xi_{m+L+1}$ are distinct words of length $L+2$, so their cylinder sets are disjoint. As a result, we have
            \begin{equation*}
                \coding(\xi')\notin \interval_{\xi_m\xi_{m+1}\tau}.
            \end{equation*}
            As $\interval_{\xi_m\xi_{m+1}\tau}$ is connected, $\interval_{\xi_m\xi_{m+1}\tau}$ lies on one side of $\coding(\xi')$. By the same reason, because $y'\notin\interval_{\xi_m\xi_{m+1}}$, the interval $\interval_{\xi_m\xi_{m+1}\tau}$ also lies on only one side of $y'$. By our definition of $z$, we have $z\in[y',\coding(\xi')]_{\xi_m}$, so $[y',\coding(\xi')]_{\xi_m}\cap\interval_{\xi_m\xi_{m+1}\tau}\neq\varnothing$. Combining this with the fact that $\interval_{\xi_m\xi_{m+1}\tau}$ is on one side of $\coding(\xi')$ and also on one side of $y'$, we deduce that
            \begin{equation*}
                \interval_{\xi_m \xi_{m+1}\tau}\subseteq[y',\coding(\xi')]_{\xi_m}.
            \end{equation*}
            Applying $\contraction_{\xi_1\cdots\xi_{m-1}}$ on both sides, we have
            \begin{align*}
                \interval_{\xi_1 \cdots\xi_{m+1}\tau}&=\contraction_{\xi_1\cdots\xi_{m-1}}(\interval_{\xi_m\xi_{m+1}\tau})\\
                &\subseteq [\contraction_{\xi_1\cdots\xi_{m-1}}(y'),\contraction_{\xi_1\cdots\xi_{m-1}}(\coding(\xi'))]_{\xi_1}=[y,\coding(\xi)]_{\xi_1}.
            \end{align*}
            Note that the monotonicity of $\contraction_{\xi_1\cdots\xi_{m-1}}$ is used here. Therefore, $\tau$ satisfies (\ref{eqn:prop:der:proof2}).
            
            By (\ref{eqn:prop:der:proof1}) and (\ref{eqn:prop:der:proof2}), we have
            \begin{equation*}
                \coding_*\gibbsmeas ([y,\coding(\xi)]_{\xi_1})\leq\coding_*\gibbsmeas(\interval_{\xi_1\cdots\xi_m})\leq C_\gibbsmeas\exp(S_{\xi_1\cdots\xi_m}\steplenpot)\leq C_\gibbsmeas\exp(\distconst_\steplenpot)\exp(S_m\steplenpot(\xi)),
            \end{equation*}
            and
            \begin{align*}
                \coding_*\gibbsmeas ([y,\coding(\xi)]_{\xi_1})\geq\coding_*\gibbsmeas(\interval_{\xi_1\cdots\xi_m\tau})&\geq C_\gibbsmeas^{-1}\exp(S_{\xi_1\cdots\xi_m\tau}\steplenpot)\\
                &\geq C_\gibbsmeas^{-1}\exp(-L\|\steplenpot\|-\distconst_\steplenpot)\exp(S_m\steplenpot(\xi)).
            \end{align*}
            In what follows, let $y\neq\coding(\xi)$ be sufficiently close to $\coding(\xi)$ such that the shortest path connecting $\coding(\xi)$ and $y$ is exactly $[y,\coding(\xi)]_{\xi_1}$. Then, by the mean value theorem and (\ref{eqn:contr'dist}), there exists a constant $C_1\geq 1$ such that
            \begin{align*}
                \diam( \interval_{\xi_1 \cdots\xi_m})&\leq C_1\exp(-S_m\geopot(\xi));\\
                \diam( \interval_{\xi_1 \cdots\xi_{m}\tau})&\geq C_1^{-1}\exp(-S_m\geopot(\xi)-L\|\geopot\|).
            \end{align*}
            Combining these with (\ref{eqn:prop:der:proof1}) and (\ref{eqn:prop:der:proof2}), we have
            \begin{equation*}
                C_1^{-1} \exp(S_nv(\xi)-L\|v\|)\leq d_{\onedimmani}(\coding(\xi),y) \leq C_1\exp(S_nv(\xi)).
            \end{equation*}
            Consequently, there exists a constant $C\geq 1$ such that for $y\neq\coding(\xi)$ sufficiently close to $\coding(\xi)$,
            \begin{align}
                C^{-1} \exp(S_n\steplenpot(\xi))&\leq\coding_*\gibbsmeas([\coding(\xi),y]_{\xi_1})\leq C\exp(S_n\steplenpot(\xi));\\
                C^{-1}\exp(-S_n\geopot(\xi))&\leq d_{\onedimmani}(\coding(\xi),y)\leq C\exp(-S_n\geopot(\xi)).\label{eqn:prop:der:proof:bilip}
            \end{align}
            From this we see that
            \begin{equation*}
                C^{-2} \exp(S_n\steplenpot_\alpha(\xi))\leq\lodert^\alpha\cdf(\coding(\xi))\leq\updert^\alpha\cdf(\coding(\xi)) \leq C^2\exp(S_n\steplenpot_\alpha(\xi)).
            \end{equation*}
            As $\xi$ is taken from $\uniflevset^\alpha_{\steplenpot,v}$, we have $\coding(\xi)\in\moderate^\alpha_\gibbsmeas$. This shows the first half of (\ref{eqn:prop:der}).
            
            Now it only remains to show the second half of (\ref{eqn:prop:der}). If $\coding^{-1}(\moderate^\alpha_\gibbsmeas)$ is empty, then the claim is trivial. Otherwise, take $\zeta\in\coding^{-1}(\moderate^\alpha_\gibbsmeas)$ arbitrarily.
            
            We define a sequence $(y_n)_{n\geq1}$ of points in $\interval$ in the following manner. For any $n\geq1$, let $\omega^{(n)}$ be an admissible word of length $m(n)\leq\#\symbset$ satisfying that $\omega^{(n)}_1=\zeta_{n+1}$ and $\omega^{(n)}_{m(n)}\neq\zeta_{n+m(n)}$. By construction, we have $\coding(\zeta)\notin\interval_{\zeta_1\cdots\zeta_n\omega^{(n)}}$, so there is a unique endpoint $y_n$ of $\interval_{\zeta_1\cdots\zeta_n\omega^{(n)}}$ satisfying that $\interval_{ \zeta_1\cdots \zeta_n\omega^{(n)}}\subseteq\left[\coding(\zeta),y_n\right]_{\zeta_1}$. Since $\coding(\zeta)$ and $y_n$ are both in $\interval_{\zeta_1\cdots\zeta_n}$, we also have $\left[\coding(\zeta),y_n\right]_{\zeta_1}\subseteq\interval_{\zeta_1\cdots\zeta_n}$. Therefore, there is some constant $C'\geq 1$ such that for all $n\in\bbN$,
            \begin{equation*}
                \frac{1}{C'}\exp(S_n \steplenpot_\alpha(\zeta))\leq \frac{\coding_*\gibbsmeas([\coding(\zeta),y_n]_{\zeta_1})}{|\coding(\zeta)-y_n|^\alpha}\leq C'\exp(S_n\steplenpot_\alpha(\zeta)).
            \end{equation*}
            Clearly $\lim_{n\rightarrow\infty}y_n=\coding(\zeta)$, so the inequality above implies that $\zeta\in\uniflevset^\alpha_{\steplenpot,v}$. Hence, we may conclude that the second inclusion in (\ref{eqn:prop:der}) holds as well.
        \end{proof}
        Now we are ready to give the proof of Theorem \ref{thm:app}, mainly using Theorem \ref{thm:main:notfreq} and Proposition \ref{prop:der}.
        \begin{proof}[Proof of Theorem \ref{thm:app}]
            Endow $\shiftsp$ with the metric $\symbdist_\geopot$ given by $\geopot$. Then, the coding map $\coding:\shiftsp\rightarrow\attractor$ is Lipschitz. Hence, on the one hand, by Proposition \ref{prop:main:rec} and Proposition \ref{prop:der}, we have
            \begin{equation*}
                \udim( \levelset^\alpha_{\steplenpot,v})=\udim(\uniflevset^\alpha_{\steplenpot,v})\geq\udim(\coding^{-1}(\moderate^\alpha_\gibbsmeas))\geq\hausdim(\moderate^\alpha_\gibbsmeas).
            \end{equation*}
            On the other hand, for any integer $l\geq 1$, if we restrict $\coding$ to $\notfreq_{\wordset,l}$, it becomes injective and further bi-Lipschitz onto its image, which can be seen from (\ref{eqn:prop:der:proof:bilip}). Therefore, by the countable stability of Hausdorff dimension,
            \begin{align*}
                \udim \left(\uniflevset^\alpha_{\steplenpot,v}\cap\notfreq_\wordset\right)&=\sup_{l\geq1}\udim\left(\uniflevset^\alpha_{\steplenpot,v}\cap\notfreq_{\wordset,l}\right)\\
                &= \sup_{l\geq1}\hausdim\left(\coding\left(\uniflevset^\alpha_{\steplenpot,v}\cap\notfreq_{\wordset,l}\right)\right)\leq\hausdim\left(\moderate^\alpha_\gibbsmeas\right).
            \end{align*}
            By Theorem \ref{thm:main:notfreq}, when $\alpha\notin\set{\steplenlo{\steplenpot}{v},\steplenup{\steplenpot}{v}}$, all the inequalities above are equalities, so $\hausdim(\moderate^\alpha_\gibbsmeas)=\udim(\levelset^\alpha_{\steplenpot,v})$.
        \end{proof}
        
        Finally, we are now able to give the proof of Corollary \ref{cor:maxdim} as follows.
        \begin{proof}[Proof of Corollary \ref{cor:maxdim}]
            First consider the case where $\steplenlo{\steplenpot}{v}<\steplenup{\steplenpot}{v}$. Then there is a unique $\alpha_0\in(\steplenlo{\steplenpot}{v},\steplenup{\steplenpot}{v})$ such that $\udim(\levelset^{\alpha_0}_{\steplenpot,v})=\udim(\shiftsp)$ \cite{pesinweiss}. By Theorem \ref{thm:app}, we thus have
            \begin{equation*}
                \hausdim( \moderate^{\alpha_0}_\gibbsmeas)=\udim(\levelset^{\alpha_0}_{\steplenpot,v})=\udim(\shiftsp)=\hausdim(\attractor),
            \end{equation*}
            and for any $\alpha\neq\alpha_0$,
            \begin{equation*}
                \hausdim( \moderate^\alpha_\gibbsmeas)\leq\udim(\levelset^\alpha_{\steplenpot,v})<\udim(\shiftsp)=\hausdim(\attractor).
            \end{equation*}
            This completes the proof for the case where $\steplenlo{\steplenpot}{v}<\steplenup{\steplenpot}{v}$.

            When $\steplenlo{\steplenpot}{v}=\steplenup{\steplenpot}{v}$, take $\alpha_0$ to be $\steplenlo{\steplenpot}{v}=\steplenup{\steplenpot}{v}$. Then we only need to show that
            \begin{equation} \label{eqn:cor:maxdim}
                \hausdim( \moderate^{\alpha_0}_\gibbsmeas)=\hausdim(\attractor),
            \end{equation}
            because for any $\alpha\neq\alpha_0$, by Proposition \ref{prop:der}, we have $\moderate^\alpha_\gibbsmeas\subseteq\coding(\levelset^\alpha_{\steplenpot,v})=\varnothing$. To show (\ref{eqn:cor:maxdim}), first note that by Proposition \ref{prop:der} and Corollary \ref{cor:bd}, we have
            \begin{equation*}
                \notfreq_\wordset= \shiftsp\cap\notfreq_\wordset=\uniflevset^{\alpha_0}_{ \steplenpot,v}\cap\notfreq_\wordset\subseteq\coding^{-1}(\moderate^{\alpha_0}_\gibbsmeas).
            \end{equation*}
            Also recall that we have seen in Corollary \ref{cor:freqnotfreqmaxdim} that $\udim(\notfreq_\wordset)=\udim(\shiftsp)$, so
            \begin{equation*}
                \hausdim( \moderate^{\alpha_0}_\gibbsmeas)=\udim(\coding^{-1}(\moderate^{\alpha_0}_\gibbsmeas))\geq\udim(\shiftsp).
            \end{equation*}
            Our proof is thus complete.
        \end{proof}
\appendix
\section{Proofs of Two Lemmas}\label{app:key}
    We shall prove Lemma \ref{lma:gjk} and Lemma \ref{lma:infixset} in this section. The notations we used in Section \ref{sec:dimspecunif} will also be used in the proofs below.

    \begin{proof}[Proof of Lemma \ref{lma:infixset}]
        In this proof, $N$ will denote $\#\symbset$, the cardinality of $\symbset$. For positive integers $n,k$ and symbols $a,b\in\symbset$, we define
        \begin{equation*}
            \mathcal{J}_{n,k}(a,b)\defeq\Set{\rho\in\symbset^*_\incmat|n\leq|\rho|\leq n+k\text{ and the word }a\rho b\text{ is admissible}}.
        \end{equation*}
        Fix an arbitrary integer $n\geq 1$ and $a,b\in\symbset$. We shall construct a map $\eta^n_{a,b}:\symbset^n_\incmat\rightarrow\mathcal{J}_{n,2N}(a,b)$ as follows.
        
        For every $\omega\in\symbset^n_\incmat$, pick $\tau$ and $\theta$ to be two shortest words, possibly empty, for which $a\tau\omega\theta b$ is admissible, and define $\eta^n_{a,b}(\omega)\defeq \tau\omega\theta$. Note that for any two symbols $a,b\in\symbset$, the length of the shortest word $\rho$ for which $a\rho b$ is admissible cannot exceed $N$. Therefore, the lengths of $\tau$ and $\theta$ are no greater than $N$. Thus, we have $n\leq |\eta^n_{a,b}(\omega)|\leq n+2N$, justifying that $\eta^n_{a,b}$ maps $\symbset^n_\incmat$ to $\mathcal{J}_{n,2N}(a,b)$.
        
        Given any $\bar{\omega}\in\eta^n_{a,b}(\symbset^n_\incmat)$, note that any $\omega\in(\eta^n_{a,b})^{-1}\set{\bar{\omega}}$, as a subword of $\bar{\omega}$, must start at one of the first $N+1$ entries in $\bar{\omega}$, so $\# (\eta^n_{a,b})^{-1}\set{\bar{\omega}}\leq N+1$. As a consequence, we have
        \begin{equation*}
            \#\mathcal{J}_{n,2N}(a,b) \geq \frac{\#\symbset^n_\incmat}{N+1}.
        \end{equation*}
        Since we assumed that $\udim(\shiftsp)>0$, we have
        \begin{equation*}
            \lim_{n \rightarrow+\infty} \frac{1}{n}\log\#\symbset^n_\incmat=h_{top}>0,
        \end{equation*}
        where $h_{top}$ denotes the topological entropy of $\lshift:\shiftsp\rightarrow\shiftsp$. Thus, we can take a sufficiently large $n$ such that $\#\mathcal{J}_{n,2N}(a,b)>C_N$ for any $a,b\in\symbset$, where
        \begin{equation*}
            C_N\defeq(N^2-1)\left(2N+1+\frac{N^{2N+1}-1}{N-1}\right).
        \end{equation*}

        Label the elements of $\symbset$ so that $\symbset\defeq\set{a_1,\cdots,a_N}$. This gives a lexicographic ordering of $\symbset^2\defeq\set{(a,b)|a,b\in\symbset}$. Following this ordering, at some step, we move to $(a_i,a_j)\in\symbset^2$, and pick one word $\rho_{i,j}$ from $\mathcal{J}_{n,2N}(a_i,a_j)$ in the following manner. At the first step, we arbitrarily pick one word $\rho_{1,1}$ from $\mathcal{J}_{n,2N}(a_1,a_1)$. At the $k$-th step, we have chosen $(k-1)$ words, and move on to some $(a_i,a_j)\in\symbset^2$. When we choose $\rho_{i,j}$ from $\mathcal{J}_{n,2N}(a_i,a_j)$, we avoid all the initial blocks and continuations of all the chosen words. As the lengths of $\rho_{i,j}$ and all the chosen words are between $n$ and $n+2N$, we can estimate the numbers of initial blocks and continuations of the chosen words as follows. Fix one arbitrary chosen word and name it $\rho$. Then, we have
        \begin{equation*}
            \#\set{\text{initial block }\rho'\text{ of }\rho| |\rho'|\geq n}\leq 2N+1
        \end{equation*}
        and
        \begin{align*}
            &\quad\# \set{\text{continuation }\rho''\text{ of }\rho| |\rho''|\leq n+2N}\\
            &=\sum_{l=n}^{n+2N} \#\set{\text{continuation }\rho''\text{ of }\rho| |\rho''|=l}\\
            &\leq \sum_{l=n}^{n+2N}N^{l-n}=\frac{N^{2N+1}-1}{N-1}.
        \end{align*}
        Since the number of chosen words is $k-1\leq N^2-1$, the number of words we shall avoid is at most $C_N$. Now that we have taken $n$ such that $\#\mathcal{J}_{n,2N}(a_i,a_j)>C_N$, we can take one word $\rho_{i,j}$ from $\mathcal{J}_{n,2N}(a_i,a_j)$ avoiding all the initial blocks and continuations of the chosen words.

        Set $\infixset\defeq\set{\rho_{i,j}|(i,j)\in\set{1,\cdots,N}^2}$. This $\infixset$ clearly satisfies the first condition in Lemma \ref{lma:infixset}. In addition, by our construction, for any two distinct words $\rho,\rho'\in\infixset$, $\rho$ is not an initial block of $\rho'$ and vice versa, implying that $[\rho]\cap[\rho']=\varnothing$. Therefore, $\infixset$ also satisfies the second condition in Lemma \ref{lma:infixset}.
    \end{proof}

    \begin{proof}[Proof of Lemma \ref{lma:gjk}]
        In this proof, we use $\phi$ to denote $\steplenpot_\alpha=\steplenpot-\alpha v$. Since we assumed that $\steplenlo{\steplenpot}{v}<\alpha<\steplenup{\steplenpot}{v}$, there exist $\xi^-,\xi^+\in\shiftsp$ and $n^-,n^+\in\bbN$ such that
        \begin{align*}
            S_{n^-} \phi(\xi^-)&<-K'-2\distconst_{\phi}-\|\infixset\|\cdot\|\phi\|\\
            &<K'+2\distconst_{\phi}+\|\infixset\|\cdot\|\phi\|<S_{n^+}\phi(\xi^+).
        \end{align*}
        Define
        \begin{equation*}
            \postfixset_0\defeq \set{\xi^-_1\cdots\xi^-_k|1\leq k\leq n^-}\cup\set{\xi^+_1\cdots\xi^+_k| 1\leq k\leq n^+}\cup\set{\text{empty word}},
        \end{equation*}
        and $\postfixset\defeq\set{\rho\tau\in\symbset^*_\incmat|\rho\in\infixset,\,\tau\in\postfixset_0}$. We claim that $\postfixset$ is the family of words we want. Clearly, $\postfixset$ is a finite set of admissible words, so we only need to show that $\postfixset$ meets the last requirement.
        
        Take an arbitrary $\omega\in\windowword_{\alpha,K'}$. Let $\xi\in[\omega]$. Then, $\left|S_{|\omega|}\phi(\xi)\right|\leq K'$. Consider the case where $S_{|\omega|} \phi(\xi)\geq 0$. Take one $\rho^-\in\infixset$ such that $\omega\rho^-\xi^-$ is admissible. Note that
        \begin{equation*}
            S_{|\omega|+|\rho^-|}\phi(\omega\rho^-\xi^-)\geq S_{|\omega|}\phi(\xi)-\distconst_{\phi}-\|\infixset\|\cdot\|\phi\|\geq-\distconst_{\phi}-\|\infixset\|\cdot\|\phi\|.
        \end{equation*}
        If $S_{|\omega|+|\rho^-|}\phi(\omega\rho^-\xi^-)\leq0$, then the inequality above indicates that $\omega\rho^-\in\windowword_{\alpha,K}$. Otherwise, note that
        \begin{equation*}
            S_{|\omega|+|\rho^-|+n^-}\phi(\omega\rho^-\xi^-)\leq S_{|\omega|}\phi(\xi)+\distconst_{\phi}+\|\infixset\|\cdot\|\phi\|+S_{n^-}\phi(\xi^-)<-\distconst_{\phi}.
        \end{equation*}
        Hence, we can take the smallest $k^-\in\set{1,\cdots,n^-}$ such that
        \begin{equation} \label{eqn:lma:gjk:proof}
            S_{|\omega|+|\rho^-|+k^-}\phi(\omega\rho^-\xi^-)<-\distconst_{\steplenpot_\alpha}.
        \end{equation}
        This means that $S_{|\omega|+|\rho^-|+k^--1}\phi(\omega\rho^-\xi^-)\geq-\distconst_{\steplenpot_\alpha}$, and therefore,
        \begin{align*}
            S_{|\omega|+|\rho^-|+k^-}\phi(\omega\rho^-\xi^-)&\geq S_{|\omega|+|\rho^-|+k^--1}\phi(\omega\rho^-\xi^-)-\|\phi\|\\
            &\geq-\distconst_{\phi}-\|\infixset\|\cdot\|\phi\|.
        \end{align*}
        Combining this fact with (\ref{eqn:lma:gjk:proof}), we can thus deduce that $\omega\rho^-\xi^-_1\cdots\xi^-_{k^-}\in\windowword_{\alpha,K}$, when $S_{|\omega|+|\rho^-|}\phi(\omega\rho^-\xi^-)>0$. We have thus shown that if $S_{|\omega|}\phi(\xi)\geq 0$, then either $\omega\rho^-$ or $\omega\rho^-\xi^-_1\cdots \xi^-_{k^-}$, for some $k^-\in\set{1,\cdots,n^-}$, is in $\windowword_{\alpha,K}$. This completes the proof for the case where $S_{|\omega|}\phi(\xi)\geq 0$. The case where $S_{|\omega|}\phi(\xi)<0$ can be treated in the same way.
    \end{proof}
\bibliographystyle{plain}
\bibliography{bdgibbs1}
\end{document}